\newtheorem{satz}{Theorem}
\newtheorem{proposition}[satz]{Proposition}
\newtheorem{theorem}[satz]{Theorem}
\newtheorem{lemma}[satz]{Lemma}
\newtheorem{corollary}[satz]{Corollary}
\newtheorem{remark}[satz]{Remark}
\def\L{\Lambda}
\def\Z{\mathbb {Z}}
\def\F{\mathbb {F}}
\def\E{\mathsf{E}}
\def\a{\alpha}
\def\d{\delta}
\def\({\big (}
\def\){\big )}
\def\G{\Gamma}
\def\le{\leqslant}
\def\ge{\geqslant}
\def\_phi{\varphi}
\def\eps{\varepsilon}
\def\Gr{{\mathbf G}}
\def\la{\lambda}
\def\D{\Delta}
\def\R{{\mathbb R}}
\def\Sid{\mathsf{Sid}}
\def\Cay{\mathsf{Cay}}
\author{Shkredov I.D.}
\title{On an application of higher energies to Sidon sets
\footnote{This work is supported by the Russian Science Foundation under grant 19--11--00001.}
}
\date{}
\begin{document}
\maketitle

\begin{center}
	Annotation.
\end{center}

{\it \small
	We show that for any finite set $A$ and an arbitrary $\eps>0$ there is $k=k(\eps)$ such that the higher energy $\E_k(A)$ is at most $|A|^{k+\eps}$ unless $A$ has a  very specific structure. 
	As an application we obtain  that any finite subset $A$ of the real numbers or the prime field either contains an additive Sidon--type subset of size $|A|^{1/2+c}$ or a multiplicative Sidon--type subset of size $|A|^{1/2+c}$. 
}
\\

\section{Introduction} 
\label{sec:intr}

Sidon sets is  a classical object of Combinatorial Number theory, which was  introduced by S. Sidon in \cite{Sidon}.
A subset $S$ of an abelian group $\Gr$ is a {\it Sidon set} iff all its non--zero differences are distinct.  
Being "more random than random"\, this interesting class of sets were extensively studied by various authors, see \cite{AE}--\cite{Green}, \cite{Linstrom}--\cite{Ruzsa} and many others papers. 
Detailed information about Sidon sets can be found in survey \cite{Bryant}, for example.

Let $\Sid(A)$ be size of the maximal (by cardinality) Sidon subset of a set $A \subseteq \Gr$. If we want to underline the group operation we write $\Sid^{+}(A)$ or $\Sid^{\times}(A)$.
In \cite{KSS} (also, see paper \cite{Semchenkov}) it was proved that for any $A\subseteq \R$ one has 
\begin{equation}\label{f:KSS}
	\Sid(A) \ge c\sqrt{|A|} \,,
\end{equation}
where $c>0$ is an absolute constant. Of course, this result is tight (just take $A$ equals a segment of integers to see that $\Sid^{+} (A) \ll \sqrt{|A|}$). 
Oleksiy Klurman and Cosmin Pohoata  (see \cite{Klurman}) asked the following sum--product-type question (on the sum--product phenomenon see, e.g., \cite{TV}): is it true that bound \eqref{f:KSS} can be improved either for $\Sid^{+}(A)$ or for $\Sid^{\times}(A)$, where $A$ is any finite subset of the real numbers? 

\bigskip 

We write $\Sid_k (A)$ for size of maximal subset of $A$ having at most $k$ representations of any non--zero element as a difference.
Thus $\Sid_1 (A) = \Sid (A)$ and bound \eqref{f:KSS} cannot be improved for the quantity  $\Sid_k (A)$ (again take $A$ equals a segment of integers). 
Our main result is

\begin{theorem}
	Let $A\subseteq \F$ be a set, where $\F = \R$ or $\F = \F_p$ (in the prime field  case suppose, in addition, that $|A|<\sqrt{p}$, say).
	Then there are some absolute constants $c>0$, $K\ge 1$ such that
\begin{equation}\label{f:main_intr1}
	\max \{ \Sid^{+}_K (A), \Sid_K^{\times}(A) \} \gg |A|^{1/2+c} \,.
\end{equation} 
	On the other hand, for any integer $k\ge 1$ there is $A \subseteq \F$ with 
\begin{equation}\label{f:main_intr2} 
	\max \{ \Sid^{+}_k (A), \Sid^{\times}_k (A) \} \ll k^{1/2} |A|^{2/3} \,.
\end{equation}
\label{t:main_intr} 
\end{theorem}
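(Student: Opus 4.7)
}
The plan is to combine the higher--energy dichotomy announced in the abstract with a probabilistic Sidon--extraction argument, modulated by the sum--product phenomenon. I would apply the dichotomy in parallel to the additive energy $\E_k^+(A)$ and to the multiplicative energy $\E_k^\times(A)$: for each operation $\bullet\in\{+,\times\}$, either $\E_k^\bullet(A)\le |A|^{k+\eps}$ or $A$ has a very specific structure in that operation. The ``structured'' conclusion cannot hold for both operations simultaneously, because a structured additive $A$ forces $|A+A|$ to be small, and the sum--product theorem (Solymosi over $\R$, Bourgain--Katz--Tao over $\F_p$ in the regime $|A|<\sqrt p$) then forces $|A\cdot A|$ to be large, contradicting multiplicative structure. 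Hence for at least one $\bullet$ we have $\E_k^\bullet(A)\le |A|^{k+\eps}$.

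\paragraph{Sidon extraction from the energy bound.}
Assuming $\E_k^\bullet(A)\le |A|^{k+\eps}$, I would sample $B\sbeq A$ with density $p=|A|^{-1/2+c'}$ for a small $c'>0$ and then delete bad configurations. The tail estimate
\[
	\bigl|\{x\neq 0 : r_A(x)\ge t\}\bigr| \le \E_k^\bullet(A)/t^k
\]
bounds the expected number of $(K+1)$--tuples of pairs in $B\times B$ sharing a common difference; choosing $K=K(k)$ large enough and $c'$ small enough relative to $\eps$, this expectation is $\ll |B|=p|A|$, so one element can be deleted from each bad tuple while leaving a $K$--Sidon subset of size $\gg |A|^{1/2+c}$ for some $c=c(\eps,k)>0$.

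\paragraph{Upper bound \eqref{f:main_intr2}.}
For the saturating example I would exhibit a set $A$ carrying partial additive \emph{and} multiplicative structure, a natural candidate being a product $A=G\cdot B$ of a geometric progression $G$ and an arithmetic progression $B$ of compatible sizes (or its analogue over $\F_p$). Any large additive $k$--Sidon subset of $A$ would impose many incidences between points of $A$ and the algebraic curves coming from the multiplicative parametrization; the Szemer\'edi--Trotter theorem (or its finite--field analogue in the range $|A|<\sqrt p$) caps this at $\ll k^{1/2}|A|^{2/3}$, and symmetry between the two operations handles $\Sid_k^\times(A)$.

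\paragraph{Main obstacle.}
The delicate step is the extraction lemma. A naive probabilistic argument from the energy bound only recovers the trivial $\sqrt{|A|}$ KSS bound, so to convert the $\eps$--gap in $\E_k\le |A|^{k+\eps}$ into a genuine power saving one needs a two--step scheme: first prune a small set of very popular differences using the tail estimate, then random--sample from the residue. Tracking $c>0$ as an explicit function of $(\eps,k)$ along the way is the crux of the argument, and it is what determines the value of $c$ in \eqref{f:main_intr1}.
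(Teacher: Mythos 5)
Your high-level plan --- higher-energy dichotomy plus probabilistic Sidon extraction plus an incidence-flavoured ingredient --- is in the right neighbourhood, but the central mechanism you propose for the lower bound does not work, and one of your side claims is factually off.

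The decisive gap is the step ``structured additive $A$ forces $|A+A|$ to be small, and sum--product then forces $|A\cdot A|$ to be large.'' The structured branch of Theorem~\ref{t:Ek} only produces a small-doubling set $H$, a set $Z$ with $|Z||H|\ll|A|^{1+\eps}$, and a large piece $A_*=(H\dotplus Z)\cap A$ with $|A_*|\gg|A|^{1-\eps}$. Nothing constrains $|Z+Z|$, so $|A_*+A_*|$ (let alone $|A+A|$) can be as large as about $|A|^2/|H|$, which is huge when $|H|\approx|A|^{\d}$. Hence the Solymosi / Bourgain--Katz--Tao sum--product theorems cannot be invoked to rule out structure in both operations, and applying the dichotomy in parallel to $\E_k^{+}$ and $\E_k^{\times}$ does not yield a contradiction. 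The paper's route is local rather than global: having fixed $A_*$, it bounds, for each $\la\neq0$, the multiplicative representation function $r_{A_*A_*}(\la)$ by fibring over $H$ --- i.e.\ counting solutions to $(h_1-q_1)(h_2-q_2)=\la$ with $h_i\in H$, $q_i\in Q:=A_*+H$ --- and invoking point/modular-hyperbola incidence bounds (Murphy over $\R$, Shkredov over $\F_p$). This gives $r_{A_*A_*}(\la)\ll|A_*|^{1-\d\k/4}$, hence $\hat{\E}^{\times}_{l+1}(A_*)\ll|A_*|^{l+1}$ for $l\gg(\d\k)^{-1}$, and then Lemma~\ref{l:random_Ek} extracts the large $\Sid_K^{\times}$ subset directly. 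You need to estimate a multiplicative quantity \emph{inside} the additively structured piece, not contrast global sumset and product set sizes.

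Your ``main obstacle'' paragraph is also mistaken: the single-pass random deletion is not stuck at $\sqrt{|A|}$. Lemma~\ref{l:random_Ek} is exactly that argument and gives $\Sid_{3k-3}(A)\gg\bigl(|A|^{2k}/\E_k(A)\bigr)^{1/(2k-1)}$, so $\E_k(A)\le|A|^{k+\d}$ with $\d<1/2$ already delivers $|A|^{\frac12+\frac{1-2\d}{2(2k-1)}}$, a genuine power saving. No two-stage prune-then-sample scheme is needed, and the exponent bookkeeping you worry about is elementary once the lemma is stated correctly.

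For the upper bound the paper uses no incidence theorem. It takes $A=\G+H\G$ with $\G$ and $H$ geometric progressions of compatible sizes, so that $A$ is simultaneously an additive sumset and (after passing to $\G(1+H\bar{\G})$) contained in a multiplicative product set, and then applies a $K_{s,t}$-type counting bound (Lemma~\ref{l:L_in_sumsets}) giving $\Sid_k(B+C)\ll\min\{|C|\sqrt{k|B|}+|B|,\,|B|\sqrt{k|C|}+|C|\}$. Your proposed $A=G\cdot B$ with $B$ an arithmetic progression is a different set, and Szemer\'edi--Trotter is both heavier and not obviously the right tool; the purely combinatorial Cauchy--Schwarz count is what actually closes the $|A|^{2/3}$ bound.
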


Oliver Roche-Newton and Audie Warren obtained a bound similar to \eqref{f:main_intr2} and another estimate of the same form was obtained by Green--Peluse, see \cite{Klurman}
 (also, see \cite[page 57]{E_Extr}).
Our construction is different from these counterexamples and  we give our own proof of \eqref{f:main_intr2} at the end of section \ref{sec:proof} for completeness.

Actually, Theorem \ref{t:main_intr}  is  a consequence of a more general fact  about so--called {\it higher energies} \cite{SS1} (all required definitions can be found in section \ref{sec:definitions}). 
We think that Theorem \ref{t:Ek_intr} below is interesting in its own right and it can find further applications in Additive Combinatorics. 

\begin{theorem}
	Let $A\subseteq \Gr$ be a set, $\delta, \eps \in (0,1]$ be parameters.
	Then there is $k=k(\d, \eps)$ such that either $\E^{}_k (A) \le |A|^{k+\delta}$ or there is $H\subseteq \Gr$, $|H| \gg |A|^{\delta(1-\eps)}$, $|H+H| \ll |A|^\eps |H|$ 
	and there exists $Z\subseteq \Gr$, $|Z| |H| \ll |A|^{1+\eps}$ with $|(H\dotplus Z) \cap A| \gg |A|^{1-\eps}$. 
\label{t:Ek_intr} 
\end{theorem}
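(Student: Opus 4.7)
The plan is to set up a dichotomy on the distribution of $r_{A-A}$ via dyadic decomposition, extract a structured set $H$ from a popular level set of $A-A$, and then cover $A$ by translates of $H$.

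\emph{Dyadic pigeonholing and near-symmetries.} Suppose $\E_k(A) > |A|^{k+\delta}$. Partition $\Gr\setminus\{0\}$ into dyadic level sets $P_\tau := \{x : \tau \le r_{A-A}(x) < 2\tau\}$, so that $\E_k(A)$ is (up to a constant and the $x=0$ contribution $|A|^k$) within a factor of $\log|A|$ of $\max_\tau |P_\tau|\tau^k$. Pigeonholing produces a dyadic $t$ with $|P_t|\, t^k \gg |A|^{k+\delta}/\log|A|$; combined with the trivial bound $|P_t|\, t \le |A|^2$, this forces $t \gg |A|^{1-(1-\delta)/(k-1)}$ and $|P_t| \gg |A|^\delta$ up to logarithmic losses. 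Each $x \in P_t$ is thus a near-period of $A$: $|A \cap (A+x)| \ge t$. Inclusion--exclusion then gives $r_{A-A}(x-y) \ge 2t - |A|$ for $x,y \in P_t$, and iterating this, the sumsets $sP_t - sP_t$ remain inside a level set of size $O(|A|)$ for $s$ up to a threshold determined by $t/|A|$.

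\emph{Small doubling and covering.} Apply the Balog--Szemer\'edi--Gowers theorem to the graph on $A$ whose edges are pairs with difference in $P_t$ (having $\gg |P_t|t$ edges), together with Pl\"unnecke--Ruzsa sumset estimates for the iterated sumsets of $P_t$ from the previous step. This produces a set $H$ with $|H| \gg |A|^{\delta(1-\eps)}$ and $|H+H| \ll |A|^\eps|H|$; the parameter $k = k(\delta,\eps)$ is tuned large enough so that the cumulative losses in BSG and Pl\"unnecke--Ruzsa quantitatively hit the target doubling. Since $H$ is built from near-symmetries of $A$, the set $A$ is approximately $H$-invariant. Ruzsa's covering lemma furnishes $Z \subseteq \Gr$ with $|Z||H| \ll |A|^{1+\eps}$ such that most of $A$ is covered by translates $H+z$, $z \in Z$; a greedy pruning of $Z$ makes this cover disjoint, yielding $H \dotplus Z$ with $|(H\dotplus Z)\cap A| \gg |A|^{1-\eps}$.

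\emph{Main obstacle.} The crux is obtaining doubling $|A|^\eps$ for $H$ when $\delta$ is bounded away from $1$. A direct application of BSG to $P_t$ gives only doubling $|A|^{O(1-\delta)}$, which is far larger than the target $|A|^\eps$. Closing this gap exploits the freedom to take $k$ very large: this forces $t$ arbitrarily close to $|A|$, permits many rounds of the inclusion--exclusion iteration used above, and when combined with Pl\"unnecke--Ruzsa (or a Croot--Sisask almost-periodicity argument) drives the doubling down to $|A|^\eps$. Calibrating $k = k(\delta,\eps)$ precisely so that all the cumulative losses (BSG, Pl\"unnecke, and the greedy disjointification) stay within $|A|^\eps$ is the main quantitative task.
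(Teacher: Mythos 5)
There is a genuine gap in the ``near-period'' step, which is load-bearing for the whole argument. From the pigeonholed inequalities $|P_t|\,t^k \gg |A|^{k+\delta}$ and $|P_t|\,t \le |A|^2$ you correctly deduce $t \gg |A|^{1-(1-\delta)/(k-1)}$, but this only makes $t$ \emph{polynomially} close to $|A|$, i.e.\ $t \gg |A|^{1-o(1)}$. It does not and cannot force $t$ to be within a constant factor of $|A|$: for any fixed $k$ the lower bound $|A|^{1-(1-\delta)/(k-1)}$ is of the form $|A|/|A|^{c}$ with $c>0$, which for large $|A|$ is far smaller than $|A|/2$. Consequently $2t-|A|$ is typically negative and the inclusion--exclusion bound $r_{A-A}(x-y)\ge 2t-|A|$ yields nothing; the entire iterated-near-symmetry step collapses, and with it the claimed route to driving the doubling of $H$ below $|A|^\eps$. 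You in fact identify this as the ``main obstacle'' but the proposed remedy (take $k$ large so $t$ becomes close to $|A|$) is exactly the step that fails, because ``close'' in the needed multiplicative sense is unattainable from the $\E_k$ hypothesis alone.

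The paper's mechanism for the same quantitative difficulty is different and is the real content of the proof. Rather than fixing one large $k$ and pigeonholing once, it tracks the sequence $\E_l(A)=|A|^{l+\kappa_l}$ and at each $l$ compares $\E_{l+1}$ with $|A|\E_l/M$ for a tunable threshold $M$. If $\E_{l+1}< |A|\E_l/M$, the exponent drops multiplicatively, $\kappa_{l+1}\le\kappa_l(1-\eps/(C_*l))$, and one iterates; after at most $\exp(O(\eps^{-1}\log(1/\delta)))$ steps this forces $\E_k(A)\le |A|^{k+\delta}$, which is how $k=k(\delta,\eps)$ arises. If instead $\E_{l+1}\ge|A|\E_l/M$, the key point is that a chain of H\"older inequalities (relating $\E(A,P)$, $\E_{l+1}$, and $\sum_x r_{P-P}^{1+1/l}(x)$) turns this into a \emph{lower bound on the additive energy of $P$ itself}, namely $\E(P)\gtrsim |P|^3/M^{2l+2}$. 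This is what makes Balog--Szemer\'edi--Gowers effective with small losses: BSG is applied to $P$ (not to a graph on $A$), with the loss controlled entirely by $M$, which is then chosen as $|A|^{\eps\kappa_l/(C_*l)}$ so that all subsequent BSG/Pl\"unnecke/covering losses stay within the $|A|^\eps$ budget. Your proposal gets the periphery right (dyadic pigeonholing for $P$, BSG, Pl\"unnecke--Ruzsa, a greedy disjoint cover to produce $Z$ and $H\dotplus Z$), but the central engine --- the $\E_{l+1}$-versus-$\E_l$ dichotomy and the H\"older step that converts it into an $\E(P)$ lower bound --- is missing, and the near-period substitute does not work.
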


In other words, we can always take large $k$ to make $\E_k (A)$ as small as possible unless the set $A$ has a very rigid structure. 
It is easy to see (or consult \cite[Theorem 22]{Shkr_H+L}) that 
Theorem \ref{t:Ek_intr} is, actually, a criterion.

Finally, at our last section we study $k$--Sidon sets (and its generalizations), that is, sets $A$ having at most $k$ representations of any non--zero element as a difference. 
This class of sets were introduced by  Erd\H{o}s in \cite{E_Extr} and it is strongly  connected with the higher energies.
We show that such sets are even more natural than usual $B_2[g]$--sets, see the definition from \cite{Bryant}. 
In particular, size of such sets can be 
estimated 
relatively easily (unlike to $B_2[g]$--sets), they have heritability properties,  they have a natural reinterpretation in terms of its Cayley graph  and so on.

We thank Oliver Roche--Newton who communicated the question of Oleksiy  Klurman and Cosmin Pohoata to the author.
Also, we thank him for very useful comments, discussions and remarks.

\section{Definitions}
\label{sec:definitions}

By $\Gr$ we denote an abelian group.
Sometimes we underline the group operation writing $+$ or $\times$ in  the considered quantities (as the energy, the representation function and so on, see below).  
Let $\F$ be the  field $\R$ or $\F=\F_p = \Z/p\Z$ for a prime $p$.

We use the same capital letter to denote  set $A\subseteq \F$ and   its characteristic function $A: \F \to \{0,1 \}$. 
Given two sets $A,B\subset \Gr$, define  
the {\it sumset} 
of $A$ and $B$ as 
$$A+B:=\{a+b ~:~ a\in{A},\,b\in{B}\}\,.$$
In a similar way we define the {\it difference sets} and {\it higher sumsets}, e.g., $2A-A$ is $A+A-A$. 
We write $\dotplus$ for a direct sum, i.e., $|A\dotplus B| = |A| |B|$. 
For an abelian group $\Gr$
the Pl\"unnecke--Ruzsa inequality (see, e.g., \cite{TV}) holds stating
\begin{equation}\label{f:Pl-R} 
|nA-mA| \le \left( \frac{|A+A|}{|A|} \right)^{n+m} \cdot |A| \,,
\end{equation} 
where $n,m$ are any positive integers. 
We  use representation function notations like  $r_{A+B} (x)$ or $r_{A-B} (x)$ and so on, which counts the number of ways $x \in \Gr$ can be expressed as a sum $a+b$ or  $a-b$ with $a\in A$, $b\in B$, respectively. 
For example, $|A| = r_{A-A} (0)$.

For any two sets $A,B \subseteq \Gr$ the additive energy of $A$ and $B$ is defined by
$$
\E (A,B) = \E^{+} (A,B) = |\{ (a_1,a_2,b_1,b_2) \in A\times A \times B \times B ~:~ a_1+b_1 = a_2+b_2 \}| \,.
$$
If $A=B$, then  we simply write $\E^{} (A)$ for $\E^{} (A,A)$.
For $k\ge 2$ put 
\begin{equation}\label{def:E_k}
\E_k (A) = \sum_x r^k_{A-A} (x) = \sum_{\a_1, \dots, \a_{k-1}} |A\cap (A+\a_1) \cap \dots  \cap (A+\a_{k-1})|^2 \,.
\end{equation}
Clearly, $|A|^k \le \E_k (A) \le |A|^{k+1}$.
Also, we write $\hat{\E}_k (A) = \sum_x r^k_{A+A} (x)$.  

The signs $\ll$ and $\gg$ are the usual Vinogradov symbols.
When the constants in the signs  depend on a parameter $M$, we write $\ll_M$ and $\gg_M$. 
All logarithms are to base $2$.
If we have a set $A$, then we will write $a \lesssim b$ or $b \gtrsim a$ if $a = O(b \cdot \log^c |A|)$, $c>0$.
Let us denote by $[n]$ the set $\{1,2,\dots, n\}$.
By $K_{s,t}$ denote the complete subgraph with two parts of sizes $s$ and $t$.

\section{Proof of the main result}
\label{sec:proof}

We 
start with 
a
probability lemma, which was known before in the case $k=2$, see \cite{AE}, say.  
Roughly speaking, ignoring the presence of the parameter $k$ in $\Sid_{O(k)}$ from formula \eqref{f:random_Ek} below, 
Lemma \ref{l:random_Ek} works better than inequality \eqref{f:KSS}  if $\E_k (A) \le |A|^{k+\omega}$
for a certain 
$\omega<1/2$.

\begin{lemma}
	Let $A\subseteq \Gr$ be a set. 
	Then for any $k\ge 2$ one has 
	\begin{equation}\label{f:random_Ek}
		\Sid_{3k-3} (A) \gg \left( \frac{|A|^{2k}}{\E_k (A)} \right)^{1/(2k-1)} \,, 
	\quad \quad \mbox{ and } \quad \quad 
		\Sid_{2k-2} (A) \gg \left( \frac{|A|^{2k}}{\hat{\E}_k (A)} \right)^{1/(2k-1)} \,.
	\end{equation}
	\label{l:random_Ek}
\end{lemma}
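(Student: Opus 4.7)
The plan is a standard random-subset-plus-deletion argument, closed off by a short combinatorial claim that converts the absence of ``generic'' bad configurations into a uniform multiplicity bound.

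For the first inequality I would sample $B\sbeq A$ by including each $a\in A$ independently with probability $p\in(0,1]$ (to be optimised), and call an ordered $k$-tuple $((a_1,b_1),\dots,(a_k,b_k))$ of ordered distinct pairs in $A\m A$ a \emph{generic bad tuple} if $a_i-b_i$ takes one common non-zero value for every $i$ and the $2k$ entries $a_1,\dots,a_k,b_1,\dots,b_k$ are pairwise distinct. Writing $T(X)$ for the number of such tuples inside $X\m X$, the trivial enumeration gives $T(A)\ls \sum_{x\ne 0}r_{A-A}^k(x)\ls \E_k(A)$; and since every generic bad tuple occupies \emph{exactly} $2k$ distinct elements we obtain the very clean expectations
$$
\E[T(B)]=p^{2k}\,T(A)\ls p^{2k}\E_k(A), \qquad \E[|B|]=p|A|\,.
$$
These are comparable at $p\sim (|A|/\E_k(A))^{1/(2k-1)}$, at which $\E[|B|-T(B)]\gg (|A|^{2k}/\E_k(A))^{1/(2k-1)}$. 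Fixing a realisation of $B$ attaining this value and removing one element from every surviving generic bad tuple produces a set $S$ of the required size that contains no generic bad $k$-tuple.

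The main obstacle, and the source of the subscript $3k-3$, is the combinatorial claim: \emph{if $S$ contains no generic bad $k$-tuple, then $r_{S-S}(x)\ls 3k-3$ for every $x\ne 0$}. Suppose to the contrary that $r:=r_{S-S}(x)\gs 3k-2$ and list the distinct contributing pairs as $(a_i,a_i-x)$, $i=1,\dots,r$. Two such pairs overlap as sets of elements if and only if $a_i-a_j\in\{x,-x\}$, so the graph $H$ on $\{a_1,\dots,a_r\}$ defined by that relation has maximum degree at most $2$ and is therefore a disjoint union of paths and cycles. Such a graph carries an independent set of size at least $\lc r/2\rc\gs k$; any $k$ of its vertices produce $k$ pairwise element-disjoint pairs with common non-zero difference $x$, i.e.\ a generic bad $k$-tuple, contradicting the construction of $S$.

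The second inequality follows from the same scheme with sums in place of differences, declaring a tuple generic bad when $a_i+b_i$ is constant and all $2k$ entries are distinct. Here two distinct ordered pairs with a common sum can share an element only by being one another's reversal, so the analogous overlap graph is a disjoint union of single edges, and an independent set of size $k$ appears as soon as the common-sum multiplicity exceeds $2k-2$ --- which is exactly why the subscript drops from $3k-3$ to $2k-2$. The expectation computation is identical with $\hat{\E}_k(A)$ in place of $\E_k(A)$, and optimising at $p\sim (|A|/\hat{\E}_k(A))^{1/(2k-1)}$ yields the second bound. In both cases the leverage comes from the degree-at-most-two structure of the overlap graph, without which the cheap expectation $p^{2k}T(A)$ --- as opposed to the much weaker $p^{k+1}T(A)$ one would be forced to use if chain-type bad tuples had to be counted --- could not be exploited.
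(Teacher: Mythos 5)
Your overall strategy — sample $B\subseteq A$ with probability $p$, count expected bad $k$-tuples of pairwise element-disjoint pairs, optimize $p$, delete one element from each surviving tuple, then convert the absence of $k$ pairwise-disjoint pairs into a multiplicity bound via an overlap graph — is exactly the paper's approach, and the expectation/deletion bookkeeping, the choice of $p$, and the treatment of the sum case (where the overlap graph is a matching, hence $\alpha\geq\lceil r/2\rceil$, giving the subscript $2k-2$) are all correct.

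However, your combinatorial claim for the difference case contains an error. You assert that a disjoint union of paths and cycles on $r$ vertices has an independent set of size at least $\lceil r/2\rceil$. That is false: an odd cycle $C_n$ has independence number $(n-1)/2<\lceil n/2\rceil$, and in the worst case (a disjoint union of triangles, which arises precisely when the difference $z$ has order $3$ in $\Gr$, e.g.\ $\Gr=(\Z/3\Z)^n$) the independence number is only $r/3$. The correct bound, which follows directly from maximum degree at most $2$ by a greedy argument, is $\alpha(H)\geq\lceil r/3\rceil$. Since the lemma is stated for an arbitrary abelian group $\Gr$, the $\lceil r/3\rceil$ bound is the one that matters; note also that if $\lceil r/2\rceil$ were the truth, your argument would yield $\Sid_{2k-2}$ in the first inequality as well, which is stronger than the lemma and fails in general. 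The final conclusion is nevertheless salvaged, because $\lceil (3k-2)/3\rceil=k$ (as $3k-2=3(k-1)+1$), so $r\geq 3k-2$ still forces $k$ pairwise element-disjoint pairs and hence the desired contradiction. Replace $\lceil r/2\rceil$ by $\lceil r/3\rceil$ in the difference case and your argument matches the paper's.
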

\begin{proof}
	Form a random set $A_* \subseteq A$ 
	piking 
	elements of $A_*$ from $A$ independently at random   with probability $q$. 
	Then the expectation of the  solutions to the equation 
	\begin{equation}\label{f:E'_k}
		\E'_k(A) := |\{ x_1 - x'_1 = \dots = x_k -  x'_k  ~:~ x_j, x'_j \in A,\,~~ x_j, x'_j \mbox {   are different} \}| 
	\end{equation}
	is 
	$q^{2k} \E'_k (A) \le q^{2k} \E_k (A)$.
	If $q^{2k} \E_k (A) \le q|A|/2$, then we can delete roughly a half of elements of $A_*$ to 
	find a subset $B$ of $A_*$, $|B| \gg |A_*|$,  having no solutions to equation \eqref{f:E'_k}. 
	Let us take any $z\neq 0$ and prove that  $r_{B-B} (z) <  3k-2 :=g$.
	Suppose that for some $x_j,x'_j \in B$ the following holds 
\begin{equation}\label{f:r_est_k}	
	0 \neq z = x_1 - x'_1 = \dots = x_{g} -  x'_g \,. 
\end{equation}
	Clearly, any 
	$x_j$ or $x'_j$ presents in \eqref{f:r_est_k} in at most two equations and hence a pair $(x_j,x'_j)$ presents in at most three equations. 
	Also, by the definition of the set $B$ any $k$ equations from \eqref{f:r_est_k} must have some equal variables. 
	It follows that $r_{B-B} (z) \le 3\cdot (k-1)$ as required. 
	Finally, taking  $q = (|A| \E^{-1}_k (A) /2)^{1/(2k-1)}$, we obtain 
	our 
	result. 
	The second bound in \eqref{f:random_Ek} can be obtained similarly.
	In this case the correspondent  analogue of \eqref{f:r_est_k} is 
\begin{equation}\label{f:r_est_k'}
		x_1 + x'_1 = \dots = x_{g} +  x'_g 
\end{equation}
	and we see that a pair $(x_j,x'_j)$ presents in at most two equations from \eqref{f:r_est_k'}.
	Again by the definition of the set $B$ any $k$ equations from \eqref{f:r_est_k'} must have some equal variables. 
	It follows that $r_{B+B} (z) \le 2k-2$ as required.
	$\hfill\Box$
\end{proof}


\bigskip

We now obtain our driving result (Theorem \ref{t:Ek_intr} from the introduction is a particular case of Theorem \ref{t:Ek}). 
The proof is in the spirit of \cite[Theorem 21]{Shkr_H+L}.

\begin{theorem}
	Let $A\subseteq \Gr$ be a set, $\delta, \eps \in (0,1]$ be parameters, $\eps \le \delta$.\\
	$1)~$ Then there is $k=k(\d, \eps) = \exp(O(\eps^{-1} \log (1/\d)))$ such that either $\E^{}_k (A) \le |A|^{k+\delta}$ or there is $H\subseteq \Gr$, $|H| \gtrsim  |A|^{\delta(1-\eps)}$, $|H+H| \ll |A|^\eps |H|$ 
	and there exists $Z\subseteq \Gr$,  $|Z| |H| \ll |A|^{1+\eps}$ 
	with 
	$$|(H\dotplus Z) \cap A| \gg |A|^{1-\eps} \,.$$ 
	$2)~$ Similarly, either there is a set $A'\subseteq A$, $|A'| \gg |A|^{1-\eps}$ and $P\subseteq \Gr$, $|P| \gtrsim |A|^\d$ such that for all $x\in A'$ one has  $r_{A-P}(x) \gg |P| |A|^{-\eps}$ or $\E_k (A) \le |A|^{k+\d}$ with $k \ll 1/\eps$.
\label{t:Ek} 
\end{theorem}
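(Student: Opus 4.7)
The plan is to handle Part 2) first by a direct dyadic pigeonhole on $r_{A-A}$, and then to derive Part 1) by iterating Part 2) along a tower of popular level sets.

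\emph{Plan for Part 2).} Starting from $\E_k(A) > |A|^{k+\delta}$, I would split the sum $\E_k(A) = \sum_x r_{A-A}(x)^k$ according to whether $r_{A-A}(x)$ exceeds the threshold $T := c|A|^{1-\eps}$. The tail is at most $T^{k-2} \E(A) \le T^{k-2}|A|^3$, which falls below $|A|^{k+\delta}/2$ as soon as $k \ge 2 + (1-\delta)/\eps$; so the head carries at least half of $\E_k(A)$. A dyadic pigeonhole on the head isolates a level $\tau \gg |A|^{1-\eps}$ and a set $P = \{x : r_{A-A}(x) \sim \tau\}$ with $\tau^k |P| \gtrsim |A|^{k+\delta}$, whence $|P| \gtrsim |A|^\delta$. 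Interchanging summation, $\sum_{x \in A} r_{A-P}(x) = \sum_{p \in P} r_{A-A}(p) \ge \tau |P|$; a Markov-type cut at threshold $T' := \tau|P|/(2|A|) \gg |P||A|^{-\eps}$ extracts $A' \subseteq A$ with $|A'| \ge \tau/2 \gg |A|^{1-\eps}$ on which $r_{A-P}(x) \ge T'$. This gives Part 2) with $k \ll 1/\eps$.

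\emph{Plan for Part 1).} I would iterate Part 2): set $P_0 := A$ and, at stage $i$, apply Part 2) to the previous popular-difference set to obtain $P_{i+1}$, stopping at the first $i$ with $|P_{i+1} + P_{i+1}| \ll |A|^\eps |P_{i+1}|$ and declaring $H := P_{i+1}$. Failure of this small-doubling alternative at stage $i$, combined with the multiplicity data $r_{P_i - P_{i+1}}(x) \gg |P_{i+1}|/|P_i|^\eps$ inherited from the previous step, is meant to force a higher-energy lower bound $\E_{k'}(P_{i+1}) \gtrsim |P_{i+1}|^{k'+\delta'}$ with admissible parameters, so that Part 2) can be reapplied to $P_{i+1}$. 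Careful bookkeeping on $(|P_i|, \delta_i)$ shows the chain terminates within $N \ll \eps^{-1} \log(1/\delta)$ stages while $|P_i|$ stays $\gtrsim |A|^{\delta(1-\eps)}$; each stage multiplies the required $k$ by a constant factor, producing $k = \exp(O(\eps^{-1}\log(1/\delta)))$. Given $H$, the set $Z$ is built by a Ruzsa-covering / greedy procedure from the high-multiplicity data $r_{A - H}(x) \gg |H|/|A|^\eps$ on $A'$: one selects $z$'s so that the translates $H + z$ are disjoint and together meet $A$ in $\gg |A|^{1-\eps}$ points, while keeping $|Z||H| \ll |A|^{1+\eps}$.

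\emph{Main obstacle.} The delicate step is justifying the induction in Part 1): one must verify that failure of the doubling alternative at stage $i$, together with the multiplicity information carried over from the previous stage, genuinely forces a useful higher-energy lower bound on $P_{i+1}$ with parameters in the correct range, so that the chain does not stall and the exponent $\delta_i$ remains bounded away from $0$ through all $O(\eps^{-1}\log(1/\delta))$ stages. This parameter bookkeeping follows the scheme of \cite[Theorem 21]{Shkr_H+L} and is the technical core of the argument. The greedy covering step that assembles $Z$ is a standard Ruzsa-type lemma and poses no additional difficulty.
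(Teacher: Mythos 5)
Your Part 2) argument is sound and broadly matches the paper's route: a dyadic pigeonhole on $r_{A-A}$ produces a high popular level set $P$ of size $\gtrsim |A|^\delta$, and a Markov cut produces $A'$. The paper organizes this as a descent on the energy index $l$ with $M=|A|^{\eps/2}$ while you cut once at the fixed threshold $T=c|A|^{1-\eps}$ for $k \gg 1/\eps$, but the two are equivalent and yours is a clean variant. Part 1), however, has a genuine gap. You propose to iterate Part 2) through nested popular-level sets $P_0, P_1, P_2, \dots$ and stop once some $P_{i+1}$ has small doubling, and the crux of your plan is the claim that if $|P_{i+1}+P_{i+1}|$ is \emph{not} small then the inherited multiplicity data forces $\E_{k'}(P_{i+1}) \gtrsim |P_{i+1}|^{k'+\delta'}$ so the recursion can continue. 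This step is not justified and I do not see how to make it work: large doubling is perfectly compatible with minimal higher energy (a random-like set has $|R+R| \approx |R|^2$ while $\E_k(R) \approx |R|^k$), so failure of small doubling of $P_{i+1}$ carries no information about $\E_{k'}(P_{i+1})$; and the multiplicity data $r_{P_i - P_{i+1}}(x) \gg |P_{i+1}|/|P_i|^\eps$ for $x$ in a large subset of $P_i$ controls a convolution of $P_i$ against $P_{i+1}$, not a moment of $r_{P_{i+1}-P_{i+1}}$. There is a secondary mismatch as well: after the first stage, the multiplicity data references $P_i$ (not $A$), so even if the recursion ran, the final covering step that is supposed to produce $|(H\dotplus Z)\cap A| \gg |A|^{1-\eps}$ would still have to be transported back to $A$.

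The paper's proof of Part 1) does not recurse on nested level sets at all. From the multiplicity identity and Cauchy--Schwarz one gets $\frac{|A||P|^2}{M^2} \lesssim \E(A,P) = \sum_x r_{A-A}(x)\,r_{P-P}(x)$; an $\bigl(l+1,\tfrac{l+1}{l}\bigr)$-H\"older split, combined with $\sum_x r^{1+1/l}_{P-P}(x) \le \E(P)^{1/l}|P|^{2-2/l}$, then converts the $\E_{l+1}(A)$ lower bound directly into $\E(P) \gtrsim |P|^3/M^{2l+2}$. A single application of Balog--Szemer\'edi--Gowers to $P$ yields the small-doubling set $H \subseteq P$, and $Z$ comes from the Ruzsa disjoint-translates argument plus Pl\"unnecke, exactly as you sketched. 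The exponential bound $k = \exp(O(\eps^{-1}\log(1/\delta)))$ arises from the choice $M = |A|^{\eps\kappa_l/(C_* l)}$ (tuned so the BSG losses $M^{O(l)}$ stay $\le |A|^{O(\eps)}$) and the resulting multiplicative recursion $\kappa_{l+1} \le \kappa_l(1-\eps/(C_* l))$ on the index $l$; this is an iteration over energy indices, not over a tower of sets. The H\"older step, which your plan omits, is precisely what lets a single pass through BSG succeed; without it the argument has nothing to feed to BSG and the tower stalls.
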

\begin{proof} 
	Let $\E_l = \E_l (A) := |A|^{l + \kappa_l}$, where $\kappa_l \in [0,1]$ and $l\ge 2$ be any integer. 
	We assume that $\kappa_l \ge \d$.  
	By the pigeon--hole principle there is a number $\D>0$ and a set $P\subseteq \Gr$ such that $P = \{ x ~:~ \D < r_{A-A} (x) \le 2\D \}$ 
	and $\E_{l+1}  \lesssim \D^{l+1} |P|$.
	In particular, 
	\begin{equation}\label{tmp:04.03_2}
		|P| \gtrsim \E_{l+1} |A|^{-(l+1)} = |A|^{\kappa_l} \ge |A|^\d \,.
	\end{equation}
	Let $M$ be a parameter, which we will choose later in an appropriate way in each case $1)$ and $2)$.  
	Suppose that 
	\begin{equation}\label{as:l,l+1}
		\E_{l+1} \ge \frac{|A| \E_l}{M} \,.
	\end{equation}
	Then 
\begin{equation}\label{tmp:04.03_1-}
	\frac{|A| \D^l |P|}{M} \le \frac{|A| \E_l}{M} \le \E_{l+1}  \lesssim \D^{l+1} |P| \,.
\end{equation}
	Hence 
\begin{equation}\label{tmp:04.03_1}
	\frac{|A| |P|}{M} \lesssim \sum_{x\in P} r_{A-A} (x) = \sum_{a\in A} |A\cap (P+a)| \,,
\end{equation}
	and the inequality $\D \gtrsim |A|/M$ follows from \eqref{tmp:04.03_1-}. 
	Take $A' = \{ a\in A~:~ |A\cap (P+a)| \gtrsim |P|/CM \}$ with a  sufficiently large constant $C$.
	Then from \eqref{tmp:04.03_1}, we get $|A'| \gtrsim |A|/M$ and by the definition of the set $A'$, we have  $r_{A-P}(x) \gtrsim |P|/M$ for all $x\in A'$.
	To obtain $2)$ we put $M = |A|^{\eps/2}$ and we are done under assumption \eqref{as:l,l+1}.
	Now suppose that $\E_{l+1} \le |A| \E_l /M$.
	Then apply the previous  argument  to $\E_{l}$ instead of $\E_{l+1}$. 
	Again, if $\E_{l} \ge |A| \E_{l-1} /M$, then we are done otherwise $\E_{l} \le |A| \E_{l-1} /M$ and we repeat the argument. 
	Clearly, after at most $k\ll 1/\eps$ steps our algorithm finishes and we obtain $\E_k (A) \le |A|^{k+\d}$.

	It remains to obtain $1)$. 
	Returning to \eqref{tmp:04.03_1} and using the H\"older inequality several times, we get 
\[
	\frac{|A| |P|^2}{M^2} \lesssim  \E(A,P) = \sum_x r_{A-A} (x) r_{P-P} (x)\,,
\]
	and hence 
\[
	\left( \frac{|A| |P|^2}{M^2} \right)^{l+1} \lesssim \E_{l+1} \left( \sum_x r^{1+1/l}_{P-P} (x) \right)^l
	\lesssim  
	|P| \D^{l+1} \cdot \E(P) |P|^{2l-2} \,.
\]
	In other words, 
\[
	\E(P) \gtrsim \frac{|P|^3}{M^{2l+2}} := \frac{|P|^3}{M_*} \,.
\]
	By the Balog--Szemer\'edi--Gowers Theorem (e.g., see, \cite{TV}), we find $H\subseteq P$, $|H| \gg |P|/M^C_*$, $|H+H| \ll M^C_* |H|$, where $C>0$ is an absolute constant which may change from line to line.
	As above
\[
	|A| |H|/M \lesssim |H| \D \ll \sum_{x\in H} r_{A-A} (x) = \sum_{a\in A} |A\cap (H+a)| \,. 
\]
	Again, we define $W\subseteq A$ similarly to the set $A'$ and obtain in particular, that   $|W| \gtrsim |A|/M$. 
	Also, let $Z\subseteq W$ be a set such that the sets  $\{H+z\}_{z\in Z}$ form the maximal system of disjoint sets. 
	By maximality, $W\subseteq Z+H-H$ and hence by the  Pl\"unnecke inequality \eqref{f:Pl-R}, we get
\[
	|A|/M \lesssim |W| \le |Z||H-H| \ll M^C_* |Z| |H| 
\] 
	and
	thus 
\[
	|(H\dotplus Z) \cap A| = \sum_{z\in Z} |A\cap (H+z)| \gtrsim  |Z| |H|/M \gtrsim |A| / M^{C}_* \,.
\]
	In particular, 
\[
	M |A| \gtrsim |Z| |H| \,.
\]
	We now put $M= |A|^{\frac{\eps \kappa_l}{C_* l}}$, where $C_*>0$ is a sufficiently large constant. 
	Hence we get $|H+H| \ll |A|^\eps |H|$ and  $|(H\dotplus Z) \cap A| \gg |A|^{1-\eps}$.
	Also,  in view of \eqref{tmp:04.03_2} one has 
	$$|H|  \gg |P|/M^C_* \gtrsim  |A|^\delta / M^{C_*} \gtrsim  |A|^{\delta (1-\eps)} $$
	as required. 
	Now suppose that inequality \eqref{as:l,l+1} fails, i.e.,   	$\E_{l+1} \le |A| \E_l /M$.
	It gives us $\kappa_{l+1} \le \kappa_l (1-\frac{\eps}{C_* l})$ and thus iterating, we see	 that our algorithm stops after at most $\exp(O(\eps^{-1} \log (1/\d)))$ steps.
	This completes the proof. 
$\hfill\Box$
\end{proof}

\begin{remark}
	Notice that Lemma \ref{l:random_Ek}, as well as the first part of Theorem \ref{t:Ek}  work for sums as for differences and thus the first part of Theorem \ref{t:main_intr} can be obtained 
	for the quantity $\Sid^*_k$ defined via pluses but not differences.
\end{remark}


A consequence of Theorem  \ref{t:Ek} is Corollary \ref{c:pure_Sidon_intr} below, which is a direct result having no the sum--product flavour. 
Roughly speaking, it says that  basic estimate \eqref{f:KSS} can be easily improved for a wide class of sets, if we 
 ignore the presence of the parameter $K$ in formula \eqref{f:pure_Sidon_intr}.

\begin{corollary}
	Let $A\subseteq \Gr$ be a set, $\delta \in (0,1/2)$ be a parameter.
	Then either for some $K=K(\d)$, $c=c(\d)$ 
	the following holds  
	\begin{equation}\label{f:pure_Sidon_intr}
	\Sid_{K} (A) \gg |A|^{1/2+c} 
	\end{equation}
	or there is a set of shifts $T$, $|T| \gg |A|^{\d/8}$ 
	such that for any $t\in T$ one has $|A\cap (A+t)| \gg |A|^{1-\d}$. 
\label{c:pure_Sidon_intr} 
\end{corollary}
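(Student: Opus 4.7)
The idea is to feed the dichotomy of Theorem \ref{t:Ek} into Lemma \ref{l:random_Ek}. Concretely, I will apply Theorem \ref{t:Ek}, part $1)$, with the given $\d$ and $\eps := \d/10$ (so $\eps \le \d$ as the theorem requires), producing an integer $k = k(\d)$ such that either (i) $\E_k(A) \le |A|^{k+\d}$, or (ii) the structured data $H, Z$ of Theorem \ref{t:Ek} exists. Case (i) will yield \eqref{f:pure_Sidon_intr} at once; case (ii) will yield the shift set $T$.

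In case (i), Lemma \ref{l:random_Ek} gives
$$\Sid_{3k-3}(A) \gg \left(\frac{|A|^{2k}}{|A|^{k+\d}}\right)^{1/(2k-1)} = |A|^{1/2 + (1-2\d)/(2(2k-1))}\,,$$
and the exponent strictly exceeds $1/2$ because $\d < 1/2$. Thus \eqref{f:pure_Sidon_intr} holds with $K := 3k-3$ and $c := (1-2\d)/(2(2k-1))$, both functions of $\d$ alone.

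In case (ii), I will harvest $T$ from $H - H$. Set $W := (H \dotplus Z) \cap A$ and, for each $z \in Z$, the fibre $W_z := \{h \in H : h+z \in A\}$, so that $\sum_z |W_z| = |W| \gg |A|^{1-\eps}$. Each pair in $W_z \times W_z$ yields a difference in $H - H$ realised inside $A$, hence, by Cauchy--Schwarz and the bound $|Z||H| \ll |A|^{1+\eps}$,
$$\sum_{t\in H-H} r_{A-A}(t) \ge \sum_{z\in Z} |W_z|^2 \ge \frac{|W|^2}{|Z|} \gg |A|^{1-3\eps}|H|\,.$$
Pl\"unnecke \eqref{f:Pl-R} together with $|H+H| \ll |A|^\eps|H|$ bounds $|H - H| \ll |A|^{2\eps}|H|$, so the average of $r_{A-A}$ on $H - H$ is $\gg |A|^{1-5\eps}$. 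Discarding the $t$ where $r_{A-A}(t)$ falls below a small constant multiple of $|A|^{1-5\eps}$ absorbs at most a fixed fraction of the total sum, and the trivial bound $r_{A-A}\le |A|$ then gives the surviving set $T$ with $|T| \gtrsim |A|^{-3\eps}|H| \gtrsim |A|^{\d(1-\eps) - 3\eps}$.

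The final step is to check that $\eps = \d/10$ clears the two thresholds in the statement: $\d(1-\eps) - 3\eps \ge 3\d/5 > \d/8$, and $1 - 5\eps = 1 - \d/2 > 1 - \d$, so $|T| \gg |A|^{\d/8}$ and $|A \cap (A+t)| = r_{A-A}(t) \gg |A|^{1-\d}$ for every $t \in T$ (the logarithmic losses concealed in $\gtrsim$ being swallowed by the strict inequalities between exponents). The one delicate point---and essentially the only obstacle---is this balancing of $\eps$: it must be small enough compared with $\d$ to beat both target exponents, yet large enough that $k = k(\d, \eps)$ from Theorem \ref{t:Ek} (and hence $K$ and $c$) remains a function of $\d$ only; the choice $\eps = \d/10$ satisfies both constraints with room to spare.
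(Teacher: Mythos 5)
Your proof is correct and follows essentially the same route as the paper's: apply Theorem~\ref{t:Ek}, feed case~(i) into Lemma~\ref{l:random_Ek}, and in case~(ii) extract $T$ by averaging $r_{A-A}$ over $H-H$. The only difference is in the averaging step — you lower-bound the unweighted sum $\sum_{t\in H-H}r_{A-A}(t)$ via the direct-sum fibres and invoke Plünnecke to control $|H-H|$, while the paper instead lower-bounds $\E(A,H)=\sum_x r_{A-A}(x)r_{H-H}(x)$ and thereby avoids Plünnecke since $\sum_x r_{H-H}(x)=|H|^2$ is automatic; both yield exponents with comfortable margin over the $\d/8$ and $1-\d$ targets.
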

\begin{proof} 
	We apply the first part of  Theorem  \ref{t:Ek} with $\d=\d/2$ and $\eps = \d/16$.
	If the first alternative holds, then we are done in view of Lemma \ref{l:random_Ek} because  $\E^{+}_k (A) \le |A|^{k+\d}$ implies \eqref{f:pure_Sidon_intr}.
	Otherwise there is $H\subseteq \Gr$, $|H| \gtrsim  |A|^{\d(1-\eps)/2}$, $|H+H| \ll |A|^{\eps} |H|$ 
	and there exists $Z\subseteq \Gr$, $|Z| |H| \ll |A|^{1+\eps}$ with  $|(H\dotplus Z) \cap A| \gg |A|^{1-\eps} \,.$ 
	For any $z\in Z$ put $A_z = A \cap (H+z) \subseteq A$. 
	Applying the Cauchy--Schwarz inequality several times,  we have 
\[
	\sum_{x} r_{A-A} (x) r_{H-H} (x) = \E(A,H) \ge \sum_{z\in Z} \E(A_z,H) \gg \sum_{z\in Z} \frac{|A_z|^2 |H|^2}{|A|^\eps |H|}
	\gg
\]
\begin{equation}\label{tmp:08.03_1}
	\gg  
		(|H||Z|)^{-1} |A|^{2-3\eps} |H|^2 \gg |A|^{1-4\eps} |H|^2 \,.
\end{equation}
	Taking $T=\{t \in \Gr ~:~ |A\cap (A+t)| \gg |A|^{1-4\eps} \}$, we obtain from \eqref{tmp:08.03_1} that 
	$$
		|T| \gg |H||A|^{-4\eps} \gg |A|^{\d/8} \,.
	$$  
	This completes the proof. 
	$\hfill\Box$
\end{proof}

\bigskip

Now we are ready to obtain  bound \eqref{f:main_intr1} of Theorem \ref{t:main_intr}. 
We give even two proofs using both statements of our driving Theorem \ref{t:Ek}.

\bigskip 

\begin{proof} 
	Take any $\d<1/2$, e.g., $\d=1/4$ and let $\eps \le \d/4$ be a parameter, which we will choose later. 
	In view of Lemma \ref{l:random_Ek} we see that $\E^{+}_k (A) \le |A|^{k+\d}$ implies 
\begin{equation}\label{tmp:08.03_2} 
	\Sid^{+}_{3k-3} (A) \gg |A|^{\frac{1}{2} + \frac{1-2\d}{2(2k-1)}} = |A|^{\frac{1}{2} + \frac{1}{4(2k-1)}}
\end{equation} 
	and we are done. 
	Here $k=k(\eps)$. 
	Otherwise there is $H\subseteq \F$, $|H| \gtrsim  |A|^{\d(1-\eps)} \ge |A|^{\d/2}$, $|H+H| \ll |A|^{\eps} |H|$ 
	and there exists $Z\subseteq \F$, $|Z| |H| \ll |A|^{1+\eps}$ with  $|(H\dotplus Z) \cap A| \gg |A|^{1-\eps} \,.$ 
	Put $A_* = (H\dotplus Z) \cap A$, $|A_*| \gg  |A|^{1-\eps}$ and we want to estimate $\E^{\times}_{l+1} (A_*)$ or $\hat{\E}_{l+1}^{\times} (A_*)$ for large $l$. 
	After that having a good upper bound for $\E^{\times}_{l+1} (A_*)$ or $\hat{\E}_{l+1}^{\times} (A_*)$,  we apply  Lemma \ref{l:random_Ek} again to find large multiplicative Sidon subset of $A_*$.

	First of all, notice that $|A_* + H| \le |H+H||Z| \ll |A|^\eps |H||Z| \ll |A|^{1+2\eps}$.
	In other words, the set $A_*$ almost does not grow after the addition with $H$.
	Let $Q = A_*+H$, $|Q| \ll   |A|^{1+2\eps}$.  
	Secondly, fix any $\lambda \neq 0$. 
	The number of the solutions to the equation $a_1 a_2 = \lambda$, where $a_1,a_2 \in A_*$ does not exceed 
	$$
		\sigma_\la := |H|^{-2} |\{ h_1,h_2\in H,\, q_1,q_2\in Q ~:~ (h_1-q_1)(h_2-q_2) = \lambda \}| \,.
	$$ 
	The last equation can be interpreted as a question about the number of incidences between points and modular hyperbolas (see \cite{s_Kloosterman}) and for each non--zero $\lambda$ the quantity $\sigma_\lambda$ can be estimated as
	$$
		\sigma_\la \ll |H|^{-2} \cdot |Q| |H|^{2-\kappa} \ll |A|^{1+2\eps} |H|^{-\kappa} \,,
	$$
	see \cite{Brendan_rich} in the case $\F=\R$ and \cite[Theorem 22]{s_Kloosterman} in the case of the prime field.
	Here $\kappa = \kappa(\d)>0$. 
	Recalling that $|H| \gg |A|^{\d/2}$, $|A_*| \gg |A|^{1-\eps}$ and taking any $\eps \le  \d \kappa/100$, we obtain after some calculations that $\sigma_\la  \ll  |A_*|^{1-\d \kappa/4}$.
	Hence taking sufficiently large $l \gg (\d \kappa)^{-1}$, we derive 
\[
	\hat{\E}_{l+1}^{\times} (A_*) = 
	\sum_{\lambda} r^{l+1}_{A_* A_*} (\lambda) \ll  |A_*|^{l+1} + (|A_*|^{1-\d \kappa/2})^l |A_*|^2 \ll |A_*|^{l+1} + |A|^{l+2-\d\kappa l/2} \ll |A_*|^{l+1}\,.
\]
	Applying Lemma \ref{l:random_Ek} and choosing  $\eps \ll l^{-1}$,   we see that 
	$$
		\Sid^\times_{2l} (A) \ge \Sid^\times_{2l} (A_*)
		\gg	
		|A_*|^{\frac{l+1}{2l+1}}
		\gg
		|A|^{\frac{(1-\eps)(l+1)}{2l+1}}
		= 
		|A|^{\frac{1}{2} + \frac{1-2\eps(l+1)}{2(2l+1)}} \gg |A|^{\frac{1}{2}+c} \,, 
	$$
	where $c = c(\d) >0$ is an absolute constant.

	Now let us give the second proof applying the last part of Theorem \ref{t:Ek}. 
	The argument is almost the same but we estimate $\hat{\E}^\times_{l+1} (A')$.
	Again, 	the number of the solutions to the equation $a_1 a_2 = \lambda$, where $a_1,a_2 \in A'$ does not exceed 
	$$
	\sigma_\la := (|P| |A|^{-\eps})^{-2} \cdot |\{ a_1,a_2\in A,\, p_1,p_2\in P ~:~ (a_1-p_1)(a_2-p_2) = \lambda \}| \,.
	$$ 
	By results from \cite{Brendan_rich}, \cite{s_Kloosterman} one has 
	$$
	\sigma_\la \ll (|P| |A|^{-\eps})^{-2} \cdot |A| |P|^{2-\kappa} \ll |A|^{1+2\eps} |P|^{-\kappa} \,,
	$$
	where $\kappa = \kappa(\d)>0$. 
	Again $|P| \gtrsim |A|^\d$ and 
	we can use the arguments as above.
	This 
	concludes 
	the proof. 
$\hfill\Box$
\end{proof}

\bigskip

To complete the proof of Theorem \ref{t:main_intr} we need a simple lemma on upper estimates of sizes of Sidon sets in sumsets.

\begin{lemma}
	Let $A\subseteq \Gr$ be a set, $A=B+C$, and $k\ge 1$ be an integer.  
	Then
	$$
		\Sid^{}_k (A) \le \min \{ |C| \sqrt{k|B|}  + |B|, |B| \sqrt{k|C|}  + |C| \} \,. 
	$$
	More generally, if $r_{B+C} (a) \ge \sigma$ for any $a\in A$, then  
	$$
		\Sid^{}_k (A) \le \sigma^{-1} \min \{ |C| \sqrt{k|B|}  + |B|, |B| \sqrt{k|C|}  + |C| \} \,. 
	$$
\label{l:L_in_sumsets}
\end{lemma}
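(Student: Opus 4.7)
The plan is to bound any $\mathsf{Sid}_k$-subset $S\subseteq A=B+C$ by an incidence count between $S$ and the translates $\{b+C : b\in B\}$. For each $b\in B$ I would set
\[
T_b := S\cap (b+C) = \{s\in S : s-b \in C\},
\]
and observe that
\[
\sum_{b\in B}|T_b| \;=\; \sum_{s\in S} r_{B+C}(s) \;\ge\; \sigma |S|
\]
by the hypothesis on $A$ (with $\sigma=1$ in the basic version, where every $a\in A$ has at least one representation).

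The main step is to control $\sum_b |T_b|^2$. Splitting the sum into diagonal and off-diagonal contributions and identifying $(b,c_1,c_2)\leftrightarrow(s_1,s_2)$ via $s_i=b+c_i$, I would rewrite
\[
\sum_{b\in B}|T_b|(|T_b|-1) \;=\; \sum_{s_1\ne s_2 \in S} r_{C-C}(s_1-s_2) \;=\; \sum_{d\ne 0} r_{S-S}(d)\, r_{C-C}(d).
\]
The $\mathsf{Sid}_k$ hypothesis gives $r_{S-S}(d)\le k$ for every $d\ne 0$, so this off-diagonal sum is at most $k\sum_{d} r_{C-C}(d) = k|C|^2$. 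Therefore $\sum_b |T_b|^2 \le \sum_b |T_b| + k|C|^2$.

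Cauchy--Schwarz now gives $\bigl(\sum_b |T_b|\bigr)^2 \le |B|\sum_b |T_b|^2$, so setting $X:=\sum_b |T_b|$ I obtain the quadratic inequality $X^2 \le |B|X + k|B||C|^2$, whence $X \le |B| + |C|\sqrt{k|B|}$. Combined with $X\ge \sigma|S|$, this yields $|S|\le \sigma^{-1}(|B|+|C|\sqrt{k|B|})$; the symmetric bound follows by interchanging the roles of $B$ and $C$.

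I do not see a serious obstacle: the whole proof is a clean Cauchy--Schwarz plus $\mathsf{Sid}_k$-count. The only step requiring care is the rewriting of the off-diagonal sum through $r_{C-C}$, which is just a change of variables, and the verification that the $\sigma$-weighted version only modifies the lower bound on $X$ and not the upper-bound estimate.
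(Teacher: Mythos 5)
Your proof is correct and follows the same approach as the paper's second (more general) argument: your quantity $\sum_{b\in B}|T_b|$ is precisely the paper's $\mathcal{S}=\sum_{b\in B}r_{\Lambda-C}(b)$, and both proofs square it, apply Cauchy--Schwarz over $b\in B$, split into diagonal and off-diagonal parts, and bound the off-diagonal part by $k|C|^2$ using the $\Sid_k$ hypothesis. One small slip worth noting: $\sum_{b\in B}|T_b|(|T_b|-1)=\sum_{s_1\neq s_2\in S}|B\cap(s_1-C)\cap(s_2-C)|$, which is only $\le$ (not $=$) $\sum_{s_1\neq s_2}r_{C-C}(s_1-s_2)$ since the witnessing $b$ must lie in $B$; the inequality goes in the direction you need, so the argument is unaffected.
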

\begin{proof}
	We give two proofs. 
	The first proof uses theory of graphs and it demonstrates transparently how $\Sid^{}_k$--sets are naturally connected  with Cayley graphs.

	Let $\Lambda$ be the maximal 
	subset of $A$ such that $r_{\Lambda-\Lambda} (x) \le k$ for any $x\neq 0$. 
 	Consider the graph $G= G(V,E)$, where $V$ is the disjoint union of $B$ and $C$, the edge $(b,c) \in E$ iff $b+c\in \Lambda$.
	Moreover, we assume that $|E|=|\L|$ ignoring elements of $\Lambda$, which have several representations as $b+c$. 
	Using the Cauchy--Schwartz inequality, we obtain  
\[
	|\Lambda|^2 \le |B| \sum_{b\in B} \sum_{c,c'\in C} E(b,c) E(b,c') 
	=
\]
\[
	= 
	|B| \sum_{c,c'\in C}\, \sum_{b\in B} E(b,c) E(b,c') = 
	|B| |\L| + |B| \sum_{c,c'\in C,\, c\neq c'}\, \sum_{b\in B} E(b,c) E(b,c') \,.
\] 
	If the last sum over $b$ is at least $k+1$, then we find a 
	complete bipartite subgraph $K_{2,k+1}$
	in $G$ and hence there are different elements $\la_1,\la'_1, \dots, \la_{k+1}, \la'_{k+1} \in \Lambda$ such that $\la_1-\la'_1=\dots = \la_{k+1}-\la'_{k+1}$. The last fact contradicts the assumption that $r_{\Lambda-\Lambda} (x) \le k$ for any $x\neq 0$. 
	Hence 
\[
	|\Lambda|^2 \le |B| \le |B| |\Lambda| + k|B| |C|^2 
\]	
	as required.

	To obtain the second part of our lemma we use a little bit different method. 
	Again, let $\Lambda$ be the set as before. 
	Then by the Cauchy--Schwarz inequality, we get 
\[
	(\sigma |\Lambda|)^2 \le \mathcal{S}^2 := \left( \sum_{x\in \Lambda} r_{B+C} (x) \right)^2 = \left( \sum_{b\in B} r_{\Lambda -C} (b) \right)^2
		\le
		 |B| \sum_{x,y\in \L} |B \cap (x-C) \cap (y-C)| \le
\]
\[
	\le 
		|B| \mathcal{S} + |B| \sum_{x \neq y\in \L} |(x-C) \cap (y-C)| 
		\le
		|B| \mathcal{S} + k |B| |C|^2 \,.
\]
	This completes the proof. 
	$\hfill\Box$
\end{proof}

\bigskip

Now we can easily obtain a non--trivial upper bound for size of maximal Sidon set in any difference set or sumset.

\begin{corollary}
	Let $A\subseteq \Gr$ be a set and $D=A-A$, $S=A+A$. 
	Then for any positive integer $k$ one has $\Sid_k (D) \ll \sqrt{k}  \min \{ |A|^{3/2}, \sqrt{|D|^3} |A|^{-1} \}$ and  
	$$
		\Sid_k (S) \ll \sqrt{k} \min \{ |A|^{3/2}, |A|^{-1} \min\{ |D| \sqrt{|S|} +|S|, |S| \sqrt{|D|} + |D| \}  \} \,.
	$$   
\end{corollary}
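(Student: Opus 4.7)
The strategy would be to apply Lemma \ref{l:L_in_sumsets} in its two forms to natural representations of $D = A - A$ and $S = A + A$ as sumsets.  For each of the two bounds in the corollary, one of the two forms of the lemma will produce exactly the right estimate, provided we pick the right representations and the right multiplicity parameter $\sigma$.

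First I would handle $\Sid_k (D)$.  The trivial representation $D = A + (-A)$, fed into the first form of Lemma \ref{l:L_in_sumsets} with $B = A$ and $C = -A$, immediately gives $\Sid_k (D) \le |A|\sqrt{k|A|} + |A| \ll \sqrt{k}\,|A|^{3/2}$.  For the second bound, the key observation is that every $d = a_1 - a_2 \in D$ admits the decomposition $d = (a_1 - a) + (a - a_2)$ for each $a \in A$, and these decompositions are pairwise distinct because the first summand determines $a$; hence $r_{D+D}(d) \ge |A|$ uniformly for all $d \in D$.  Applying the general weighted form of Lemma \ref{l:L_in_sumsets} to $D$ with $B = C = D$ and $\sigma = |A|$ then yields $\Sid_k (D) \le |A|^{-1}\(|D|\sqrt{k|D|} + |D|\) \ll \sqrt{k}\,|D|^{3/2}/|A|$, which together with the previous bound gives the desired minimum.

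For $\Sid_k (S)$, the bound $\sqrt{k}\,|A|^{3/2}$ is immediate from the first form of the lemma applied to $S = A + A$.  For the refined bound the same shifting trick works: each $s = a_1 + a_2 \in S$ can be written as $s = (a_1 - a) + (a + a_2)$ with $a_1 - a \in D$ and $a + a_2 \in S$, and as $a$ ranges over $A$ the pairs are distinct, so $r_{D+S}(s) \ge |A|$ for every $s \in S$.  Applying the general form of Lemma \ref{l:L_in_sumsets} with $B = D$, $C = S$ and $\sigma = |A|$ then gives
$$\Sid_k (S) \le |A|^{-1}\min\{|S|\sqrt{k|D|} + |D|,\; |D|\sqrt{k|S|} + |S|\},$$
which is absorbed into the claimed expression.

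There is no real obstacle: the whole argument consists in spotting the two \emph{interpolation identities} $a_1 - a_2 = (a_1 - a) + (a - a_2)$ and $a_1 + a_2 = (a_1 - a) + (a + a_2)$ valid for any $a \in A$, and then invoking Lemma \ref{l:L_in_sumsets}.  The only minor bookkeeping is to verify that the lower-order terms of the form $|D|/|A|$ and $|S|/|A|$ produced by the second summand in Lemma \ref{l:L_in_sumsets} are absorbed into the respective main terms, which is automatic once $k|D|,k|S| \gg 1$; trivial cases are handled separately.
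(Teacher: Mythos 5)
Your proof follows exactly the paper's route: the $|A|^{3/2}$ bounds come from the first part of Lemma~\ref{l:L_in_sumsets} applied to $D=A+(-A)$ and $S=A+A$, the refined bound for $D$ from the second part with $B=C=D$ and $\sigma=|A|$ (using $r_{D+D}(d)\ge|A|$, where the paper just asserts this and you supply the identity $d=(a_1-a)+(a-a_2)$), and the refined bound for $S$ from $B=D$, $C=S$, $\sigma=|A|$ via $r_{D+S}(s)\ge|A|$. The argument is correct and coincides with the paper's; you have merely made explicit the elementary identities behind the two multiplicity lower bounds.
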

\begin{proof} 
	The bound $\Sid_k (D), \Sid_k (S) \ll \sqrt{k} |A|^{3/2}$ follows immediately  from the first part of Lemma \ref{l:L_in_sumsets}. 
	Further it is easy to see (or consult \cite{SS1}) that for any $d\in D$ one has $r_{D-D} (d) \ge |A|$. 
	Applying the second part of Lemma \ref{l:L_in_sumsets} with $A=B=C=D$ and $\sigma = |A|$, we obtain $\Sid_k (D) \ll \sqrt{k |D|^3} |A|^{-1}$.
	To prove the second part of our lemma notice that for any $s\in S$ the following holds $r_{D+S} (s) \ge |A|$.  
	This 
	concludes 
	the proof. 
$\hfill\Box$
\end{proof}

\bigskip

To complete the proof of Theorem \ref{t:main_intr} we need to obtain upper bound \eqref{f:main_intr2}. 
In the case $\F = \R$ 
we put $B=\G$, $C=H\G$, where $\G = \{1,g,\dots, g^{n} \}$, $g\ge 2$ is an integer, 
$\bar{\G} = \{g^{-n}, \dots, g^{-1}, 1,g,\dots, g^{n} \}$, $H= \{ g^{n+1}, g^{2(n+1)}, \dots, g^{n(n+1)} \}$. 
Then $A=B+C = \G + H\G$ is contained  in $\G (1+H \bar{\G})$  and in view of Lemma \ref{l:L_in_sumsets} any additive/multiplicative $k$--Sidon   subset of $A$ has size at most $O_k(|\G|^2)=O_k (|A|^{2/3})$ because as one can easily see $|A|=|\G|^3$. 
Similarly, in the case $\F = \F_p$ we apply Lemma \ref{l:L_in_sumsets} with $B=\G$, $C=H\G$, where $\G\le \F_p^*$, $H\subseteq \F_p^* / \G$ and $|H| = |\G|$ is sufficiently small relatively to $p$.
Then $A:= B+C = \G (1+\G H)$ and hence by  Lemma \ref{l:L_in_sumsets} any additive/multiplicative $k$--Sidon   subset of $A$ has size at most $2\sqrt{k} |\G|^2$. 
To obtain the required bound $|A|\gg |\G+H\G|$ for an appropriate $H$ one can use the random choice (we leave the details to the interested reader).

\section{On $B^\circ_2 [g]$--sets}

In the previous section we 
have obtained some results about the family of sets $S$ with 
\begin{equation}\label{def:B_c}
	r_{S-S} (x) \le g\,,  \quad \quad  \forall x\in \Gr \,,  \quad x\neq 0 \,.   
\end{equation}
This class of sets were introduced by Erd\H{o}s in \cite[page 57]{E_Extr} 
(also, see \cite{EH}) 
and we denote this family as $B^\circ_2 [g]$ (Erd\H{o}s used the symbol $B^{'(k)}_2$). 
It was said in \cite{E_Extr} that "V.T. S\'os and I considered $B^{'(k)}_2$ sequences... We could not decide whether there is a $B^{'(k)}_2$ sequence which is not the union of a finite number of Sidon sequences." According to the author's knowledge this paper of Erd\H{o}s and S\'os was not published. 
The question 
from \cite{E_Extr} is 
a nice problem  of Erd\H{o}s, which is open and if it has 
a negative 
answer, then the original question of 
Klurman--Pohoata 
would be closed thanks to our Theorem \ref{t:main_intr}.  
Let us underline it one more time that it is possible to construct sets $S$ with bounded $r_{S+S} (x)$ which are not the union of a finite number of Sidon sequences (see \cite{E_Extr} and \cite{AE}).
It seems like 
condition \eqref{def:B_c} has another nature 
and that   is why 
we devote this section studying some further properties of $B^\circ_2 [g]$--sets. 
To see that this is a special family, notice that, for example, the condition $r_{S-S} (x) \ll 1$, $x\neq 0$ has an interpretation in terms of the Cayley graph of $S$ but $r_{S+S} (x) \ll 1$ 
cannot be expressed in terms of any Cayley graph.

First of all, notice that if $S$ is a random subset of $[N]$, which was obtained piking elements from $[N]$ independently at random with probability $q \sim N^{-1/2}$, then 
with probability $1-o(1)$ one has $r_{S-S} (x) \ll \log N$ see, e.g., \cite[Lemma 4.3]{DSSS} and a similar lower bound for the function $r_{S-S} (x)$ takes place.
Thus for any fixed $g$ sets belonging to $B^\circ_2 [g]$ 
are far 
from random sets.

Secondly, as it was noted in the proof of Lemma \ref{l:L_in_sumsets} a set $S$ belongs to the family $B^\circ_2 [g]$ iff its Cayley graph $\Cay(S,\Gr)$ has no complete bipartite subgraphs $K_{2,g+1}$.  
Recall that the vertex set of $\Cay(A,\Gr)$ is $\Gr$ and $(x,y)$ is an edge of $\Cay(A,\Gr)$ iff $x-y\in A$. 
Another equivalent interpretation of \eqref{def:B_c} is 
\begin{equation}\label{def:B_c_2}
	|S \cap (S + x_1) \cap \dots \cap (S+x_g)| \le 1 \quad \quad  \mbox{for all distincts and non--zero } \quad x_1,\dots, x_g \in \Gr
	\,. 
\end{equation}
This reinterpretation of $B^\circ_2 [g]$--sets says that 
the considered 
family is naturally connected with the higher energies \cite{SS1} (also, see the second formula in definition \eqref{def:E_k}). 
Also, if we put for an arbitrary  $A\subseteq \Gr$ 
$$\D_g (A) := \{ (a,\dots,a) \in A^g ~:~ a\in A\} \subseteq \Gr^g \,, $$
then \eqref{def:B_c_2} means that the sum $S^g$ and $\D_g (S)$ is direct, in other words, $S^g$ and $\D_g (S)$ form a {\it co--Sidon pair}, see \cite{DSSS}. 
Further formula \eqref{def:B_c_2} suggests us the following definition for $B^\circ_k [g]$ sets
\begin{equation}\label{def:B_c_3}
	|S \cap (S + x_1) \cap \dots \cap (S+x_g)| < k \quad \quad  \mbox{for all distincts and non--zero } \quad x_1,\dots, x_g \in \Gr \,.
\end{equation}
Similarly, $S\in B^\circ_k [g]$ iff  $\Cay(S,\Gr)$ does not contain  $K_{k,g+1}$ or, in other words, $S$ does not contain any sumsets $X+Y$, where $|X|=g+1$, $Y=k$.   
Since the number of edges in $\Cay(S,\Gr)$ equals $|S||\Gr|$ for any finite group $\Gr$, it follows that to estimate size of $S$ 
it is enough to bound the number of edges in $K_{k,g+1}$--free graphs. 
Such results are discussed in detail  in survey \cite{FS}, for example.   
Of course there is a direct approach to estimate cardinality of $S \in B^\circ_2 [g]$. 
Namely, if $S \in B^\circ_2 [g]$ belongs to a  group $\Gr$ of size $N$, then, obviously, 
\[
	|S|^2 = \sum_{x} r_{S-S} (x) \le |S| + g (N-1) 
\]
and hence 
\begin{equation}\label{f:Bg_G}
	|S| < \sqrt{gN} + 1 \,.
\end{equation}
Similarly, if $S\in B^\circ_k [g]$ and $S$  belongs to a group $\Gr$ of size $N$, then 
\[
	|S|^{g+1} = \sum_{x_1,\dots,x_g} |S \cap (S + x_1) \cap \dots \cap (S+x_g)| < kN^g + \binom{g+1}{2} |S|^g
\]
and thus $|S|<k^{\frac{1}{g+1}} N^{\frac{g}{g+1}} + \binom{g+1}{2}$. 
Of course in the case when $S\subseteq [N]$ there are no such good upper bounds for size of $S$ even if $S$ is a classical Sidon set.   
Nevertheless, we easily obtain a generalization of Linstrom's result \cite{Linstrom} for $B^\circ_2 [g]$--sets in the segment (as well as for $B^\circ_k [g]$--sets but it is not the main topic of our paper, for better bounds see \cite{Ch[g]}).

\begin{proposition}
	Let $S \subseteq [N]$ belongs to the family $B^\circ_2 [g]$. 
	Then 
\begin{equation}\label{f:Bg_size}
	|S| < \sqrt{gN} + (gN)^{1/4} + 1 \,.
\end{equation}
	Generally, if $S\in B^\circ_k [g]$, then 
\begin{equation}\label{f:Bg_size2}
	|S| <  k^{\frac{1}{g+1}} N^{\frac{g}{g+1}}+ \binom{g+1}{2}^{\frac{1}{g+1}} k^{\frac{g}{(g+1)^2}} N^{\frac{g^2}{(g+1)^2}} +  1\,.
\end{equation}
\label{p:Bg_size}
\end{proposition}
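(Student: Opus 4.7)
The plan is to extend Lindstrom's sliding-window argument to the $B^\circ_2[g]$ setting for part $(i)$ and to its $(g{+}1)$-fold H\"older analogue for $B^\circ_k[g]$ in part $(ii)$. Throughout, set $m:=|S|$. For part $(i)$, fix a positive integer $t$ (to be chosen later) and consider the sumset $S+[t]=\{s+u : s\in S,\ u\in\{1,\dots,t\}\}$. Since $S\subseteq[N]$, this sumset lies in $\{2,\dots,N+t\}$, so its support has at most $N+t-1$ elements while $\sum_y r_{S+[t]}(y)=mt$. The second moment expands as $\sum_y r_{S+[t]}(y)^2 = \sum_d r_{S-S}(d)\,r_{[t]-[t]}(d)$, and using definition \eqref{def:B_c} (namely $r_{S-S}(d)\le g$ for $d\ne 0$) this is at most $mt+g\,t(t-1)$. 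Cauchy--Schwarz then yields the key inequality
\[
m^2\,t \;\le\; (N+t-1)\bigl(m+g(t-1)\bigr).
\]
Since the right-hand side is roughly $(N/t)(m+gt)$, balancing the two contributions suggests $t\approx(Nm/g)^{1/2}$, which for $m\approx\sqrt{gN}$ is of order $N^{3/4}g^{-1/4}$. Choosing $t$ to be the integer nearest this value and expanding the inequality around the ansatz $m=\sqrt{gN}+y$ reduces the problem to a quadratic in $y$ whose positive root is strictly less than $(gN)^{1/4}+1$, producing \eqref{f:Bg_size}.

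For part $(ii)$, the same sliding window is used but with the Jensen/H\"older inequality applied at exponent $g+1$:
\[
(mt)^{g+1} \;\le\; (N+t-1)^{g}\cdot\sum_y r_{S+[t]}(y)^{g+1}.
\]
Writing the $(g{+}1)$-st moment in the parameters $(s_0;x_1,\dots,x_g)$ with $x_i=s_0-s_i$ and $u_i=u_0+x_i$, it equals $\sum_{(x_i)\in\Z^g}\bigl|S\cap\bigcap_{i=1}^{g}(S+x_i)\bigr|\,w(x_1,\dots,x_g)$, where the window-overlap weight $w(x)=|\{u_0\in[t]: u_0+x_i\in[t]\ \forall i\}|$ satisfies $w(x)\le t$ and $\sum_x w(x)=t^{g+1}$. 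For $(x_1,\dots,x_g)$ distinct and nonzero, definition \eqref{def:B_c_3} gives $|S\cap\bigcap_i(S+x_i)|<k$, so summing over such tuples contributes less than $k\,t^{g+1}$. The remaining ``degenerate'' tuples (some $x_i=0$ or $x_i=x_j$ with $i\ne j$) fall into $\binom{g+1}{2}$ symmetry types; for each type the intersection is bounded trivially by $m$ while the associated $w$-sum is at most $t^{g}$, yielding a total correction at most $\binom{g+1}{2}\,m\,t^{g}$. Combining these estimates,
\[
m^{g+1}\,t^{g+1} \;\le\; (N+t-1)^{g}\Bigl(k\,t^{g+1}+\tbinom{g+1}{2}\,m\,t^{g}\Bigr).
\]
Choosing $t$ so that the two right-hand-side contributions are balanced (after the ansatz $m=A+y$ with $A=k^{1/(g+1)}N^{g/(g+1)}$, this amounts to a specific polynomial growth of $t$ in $N$, $k$, $g$) and expanding carefully produces \eqref{f:Bg_size2}.

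The main obstacle is the bookkeeping for the degenerate tuples in the $(g{+}1)$-st moment: one must verify that the weighted sum over tuples with some $x_i=0$ or $x_i=x_j$ is indeed bounded by $\binom{g+1}{2}\,m\,t^{g}$ and nothing larger, so that the segment correction reproduces the shape of the group correction $\binom{g+1}{2}|S|^{g}$ that already appeared in the text. Once this is in place the optimization of $t$ is routine, but some care with lower-order terms is needed to secure the strict inequalities in \eqref{f:Bg_size} and \eqref{f:Bg_size2} rather than their $O(1)$-slack analogues.
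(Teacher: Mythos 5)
Your overall plan --- the Lindstrom sliding-window argument for part (i) and its $(g+1)$-fold H\"older analogue with a nondegenerate/degenerate split for part (ii) --- is exactly the paper's approach (the paper embeds $S$ into $\Z/(N+u)\Z$ and works with $r_{S-I}$, $I=[u]$, which is essentially the same as your $S+[t]$ computation in $\Z$). Part (i) is fine. The gap is in the degenerate-tuple estimate in part (ii), and it is fatal for $g\ge 2$.

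You bound each degenerate type by ``intersection $\le m$, $w$-sum $\le t^g$,'' arriving at a correction $\binom{g+1}{2}\,m\,t^{g}$. That is the estimate $\sum_y r_{S+[t]}^{g}(y)\le t^{g-1}\sum_y r_{S+[t]}(y)=t^{g-1}\cdot mt = m\,t^{g}$, i.e.\ you used $\max_y r_{S+[t]}(y)\le t$. But for the optimal $t$ one necessarily has $t>m$ (indeed $t\sim\sqrt{mN/(gk)}\gg m$ once $m\sim k^{1/(g+1)}N^{g/(g+1)}$), so the correct pointwise bound is $\max_y r_{S+[t]}(y)=\max_y|S\cap(y-[t])|\le |S|=m$, giving $\sum_y r_{S+[t]}^{g}(y)\le m^{g-1}\cdot mt= m^{g}\,t$. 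This is precisely what the paper uses (its $\binom{g+1}{2}|S|^{g}u$ term), and it is strictly smaller than your $m\,t^{g}$ whenever $t>m$. In fact your own remark that the segment correction should ``reproduce the shape of the group correction $\binom{g+1}{2}|S|^{g}$'' points at $m^{g}t$, not $m\,t^{g}$, so the write-up is internally inconsistent.

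To see that the weaker bound cannot yield \eqref{f:Bg_size2}: dividing your inequality by $t^{g+1}$ gives
\begin{equation*}
\frac{m^{g+1}}{(N+t-1)^{g}} \;\le\; k+\binom{g+1}{2}\,\frac{m}{t}\,,
\end{equation*}
and minimizing the right-hand side over $t$ yields a second-order term of order $N^{(2g-1)/(2(g+1))}=N^{(2g^{2}+g-1)/(2(g+1)^{2})}$ for $m-k^{1/(g+1)}N^{g/(g+1)}$, whereas the target is $N^{g^{2}/(g+1)^{2}}=N^{2g^{2}/(2(g+1)^{2})}$. The exponents differ by $(g-1)/(2(g+1)^{2})>0$ for $g\ge 2$ (they coincide only at $g=1$, which is why part (i) survives). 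With the corrected degenerate bound $\binom{g+1}{2}m^{g}t$ the right-hand side becomes $k+\binom{g+1}{2}(m/t)^{g}$, the balance gives $t\sim(m^{g}N/k)^{1/(g+1)}$ exactly as in the paper, and the stated estimate follows. So the fix is one line --- replace the bound $\max_y r_{S+[t]}(y)\le t$ by $\max_y r_{S+[t]}(y)\le|S|$ in the degenerate-sum estimate --- but as written your argument does not prove \eqref{f:Bg_size2} for any $g\ge 2$.
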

\begin{proof} 
	We use the method from \cite{Green}. 
	Let $u=[N^{3/4} g^{-1/4}]$ be a parameter and $I = [u]$. 
	Embed  $S$ into $\Z/(N+u)\Z$. It is easy to check that for any $x\in [-u,u] \setminus \{0\}$ one has $r_{S-S} (x) \le g$, where now $x$ runs over  $\Z/(N+u)\Z$. 
	Hence using the Cauchy--Schwarz inequality to estimate the common energy $\E(S,I)$, we get
\[
	\frac{|S|^2 u^2}{N+u}\le \E(S,I) = \sum_{x} r_{S-S} (x) r_{I-I} (x) < |S| u + g u^2   
\]
	or, in other words, 
\[
	g u^2 + u (gN+|S|-|S|^2) + |S| N > 0 
\]
	and substituting $u=[N^{3/4} g^{-1/4}] < N^{3/4} g^{-1/4}$ and $|S| = \sqrt{gN} + (gN)^{1/4} + C$, we find after some calculations that the condition  $C\le 1$ is enough. 
	To obtain \eqref{f:Bg_size2} we use a similar argument (with another parameter $u$, of course) to estimate an analogue of the higher common energy of $S$ and $I$, namely,  
\begin{equation}\label{tmp:10.03_1}
	\frac{(|S|u )^{g+1}}{(N+u)^{g}} \le \sum_{x} r_{S-I}^{g+1} (x) = \sum_{x_1,\dots,x_g} |S\cap (S+x_1) \cap \dots \cap (S+x_g)| 
	|I\cap (I+x_1) \cap \dots \cap (I+x_g)|
	<
\end{equation}
\[ 
	<
	k u^{g+1} + \binom{g+1}{2} \sum_{x} r_{S-I}^{g} (x)
	\le
	k u^{g+1} + \binom{g+1}{2} |S|^g u := k u^{g+1} + C_g |S|^g u \,.
\]
	We have used in \eqref{tmp:10.03_1} that all variables $x_j$ belong to $[-u,u]$. 
	Finally, in the case of $B^\circ_k [g]$--sets 
	an appropriate 
	choice of the parameter $u$ is $u=[C^{1/(g+1)}_g k^{-1/(g+1)^2} N^{1-g/(g+1)^2}]$ and after some calculations we obtain \eqref{f:Bg_size2} (we roughly estimate the third term in this formula by one and actually, we do not optimize the constant in the middle term of \eqref{f:Bg_size2}, taking it for the simplest way to check).
	This completes the proof. 
$\hfill\Box$
\end{proof}


\bigskip 

As for lower bounds on size of maximal subsets of $B^\circ_k [g]$,  again one can  consult \cite{FS} to find correspondent lower bound for the number of edges in graphs having no $K_{s,t}$. 
Our graphs must be {\it Cayley graphs} and such constructions are known for $K_{2,2}$, $K_{3,3}$ (Brown's construction, see \cite{Brown}) and for $K_{s,t}$, $t\ge s!+1$ (so--called, norm--graphs), see   \cite{FS}. 
As for $K_{2,t}$, $t>2$ one can obtain a result similar to \cite[Theorem 1.6]{CRV}. 
Namely, define for any $g$ the quantity 
\[
	\a_g = \limsup_{N\to \infty} \frac{ \max\{ |S| ~:~ S\subseteq \Z/N\Z\,, S\in B^\circ_2 [g] \} }{\sqrt{N}} \,.
\]

\begin{theorem}
	We have
\[
	\a_g = \sqrt{g} + O(g^{3/10}) \,.
\]
\end{theorem}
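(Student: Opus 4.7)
I would prove matching inequalities $\a_g \le \sqrt g$ and $\a_g \ge \sqrt g - O(g^{3/10})$; their combination gives the stated equality. The upper bound follows immediately from \eqref{f:Bg_G}: any $S \in B^\circ_2[g]$ inside $\Z/N\Z$ satisfies $|S| < \sqrt{gN} + 1$, so $|S|/\sqrt N < \sqrt g + 1/\sqrt N$; taking $\limsup$ as $N \to \infty$ yields $\a_g \le \sqrt g$, which is stronger than the claimed $\sqrt g + O(g^{3/10})$.

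For the lower bound I would follow the approach of \cite[Theorem 1.6]{CRV}. The central ingredient is the Singer $(v,k,\lambda)$-difference set in $\Z/v\Z$ arising from the projective space $PG(3, q)$: for any prime power $q$, one has $D \subseteq \Z/v\Z$ with
\[ v = q^3 + q^2 + q + 1, \quad k = q^2 + q + 1, \quad \lambda = q+1, \]
and $r_{D-D}(y) = q + 1$ for every non-zero $y$. Thus $D \in B^\circ_2[q+1]$ with $|D|/\sqrt v = \sqrt{q+1}\,(1+o(1))$. Given $g$, I would take $q$ to be the largest prime power with $q + 1 \le g$; by a classical prime-gap estimate of the form $p_{n+1} - p_n \ll p_n^{4/5}$, we get $g - (q+1) = O(g^{4/5})$, hence $\sqrt{q+1} \ge \sqrt g - O(g^{3/10})$. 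Since $B^\circ_2[q+1] \subseteq B^\circ_2[g]$, this witnesses $\a_g \ge \sqrt g - O(g^{3/10})$.

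The main obstacle I anticipate is extending this bound from the single value $N = v$ to the $\limsup$ over $N \to \infty$: for fixed $g$ only finitely many Singer parameter tuples $(q, d)$ satisfy $\lambda \le g$, so a direct application yields only finitely many data points. Following \cite{CRV}, this is resolved by either a Lindstr\"om-style interval embedding of $D$ into $\Z/N\Z$ (valid without wrap-around provided $v \le N/2$), by shifted unions of several Singer sets inside a larger cyclic group (where the main technical point is controlling the diagonal multiplicities $r_{D-D}(t_i - t_j)$ at shift differences), or by alternative algebraic constructions producing dense $B^\circ_2[g]$-sets for arbitrary $N$. The specific exponent $3/10$ is then determined by balancing the prime-gap estimate against the efficiency of the chosen construction; a sharper gap bound (such as Baker--Harman--Pintz) would yield a smaller error term.
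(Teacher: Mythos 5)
The upper bound $\a_g \le \sqrt{g}$ that you derive from \eqref{f:Bg_G} is correct and is indeed what the paper relies on; no issue there. Your lower-bound route, however, is genuinely different from the paper's. The paper, following \cite{CRV}, builds a union of parabolas $A=\bigsqcup_{u\in U}\{(x,x^2/u):x\in\Z/p\Z\}\subseteq(\Z/p\Z)^2$ over an arithmetic progression $U$ of length $k$; the bound $g=k^2+O(k^{3/2})$ comes from Lemma~3.2 of \cite{CRV} together with an averaging over the starting point $t$ of $U$ and Weil's estimate for Legendre-symbol sums, and crucially the prime $p$ can be taken arbitrarily large for a fixed $k$, which is what delivers the $\limsup$. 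Your Singer $PG(3,q)$ parameters are quoted correctly, and for that single modulus the set is an exactly regular $B^\circ_2[q+1]$ difference set of optimal density, so the numerology $\sqrt{q+1}=\sqrt{g}-O(g^{3/10})$ via a prime-gap estimate of strength $O(g^{4/5})$ is fine as far as it goes.

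The gap is the one you flagged but did not close, and it is not a peripheral technicality — it is the substance of the theorem. The quantity $\a_g$ is a $\limsup$ over $N\to\infty$, so one must produce $B^\circ_2[g]$ sets of density near $\sqrt{g}$ in $\Z/N\Z$ for infinitely many $N$; a Singer set gives exactly one admissible modulus $v=q^3+q^2+q+1$ per fixed $g$. Your first proposed repair (interval embedding with $v\le N/2$) loses a factor $\sqrt{2}$ in $|S|/\sqrt{N}$, which wipes out the main term $\sqrt{g}$ entirely, not just the error term. Your second suggestion (shifted unions of Sidon-type sets inside a large cyclic group) is the right instinct — it is essentially what the parabola construction is — but controlling $r_{S-S}$ across all the cross-differences of shifts is precisely the technical core of the CRV argument (their Lemma~3.2 plus the averaging over $t$ and Weil), and without carrying it out you have a density bound for finitely many $N$ only, which says nothing about the $\limsup$. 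So as written the lower bound is not proved, and the mechanism that would make the exponent $3/10$ appear in a complete argument (Weil-bound error terms in the cross-difference counts) is different from the prime-gap mechanism you invoke.
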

\begin{proof} 
	Actually, our argument almost coincides with the approach  of the proof \cite[Theorem 1.6]{CRV}, so we omit the details. 
	
	By the method of \cite[section 4]{CRV} it is enough to construct a set $A\subseteq (\Z/p\Z)^2$ (here $p$ is a  prime number) such that $A \in  B^\circ_2 [g]$ with $g=k^2 + O(k^{3/2})$ and $|A| = kp +O(k)$.  
	We put $A=\bigsqcup_{u\in U} A_u$, where $U$ is an appropriate arithmetic progression, $U=t+[k]$ and for any non--zero $u \in \Z/p\Z$ we define 
	$$A_u = \{ (x, x^2/u) ~:~ x\in \Z/p\Z \} \subset  (\Z/p\Z)^2 \,.$$ 
	Clearly, $|A|=kp-k+1$. 
	Let $r_{u,v} (x) = r_{A_u-A_v} (x)$.
	Our task is to estimate for any $x$ the sum 
	$$r_{A-A} (x) \le \sum_{u,v\in U} r_{u,v} (x)  \,.$$
	By \cite[Lemma 3.2]{CRV} one has $r_{u,v} (x) + r_{u',v'} (x) =2$, provided $u+v=u'+v'$ and $\left(\frac{uvu'v'}{p}\right) = -1$.   
	Using this lemma and acting exactly as on pages 2794--2795 of \cite{CRV}, we find 
\[
	r_{A-A} (x) \le k^2 + \sum_{|l| < k} \left| \sum_{i+j=k+1+l} \left( \frac{(t+i)(t+j)}{p} \right)\right| \,.
\]
	After that taking the summation over $t$ (to find an appropriate $U$) and applying the Cauchy--Schwarz inequality and Weil's bound on sum of Legendre symbols, we obtain   the required estimate (see the rest of the argument from  \cite{CRV}). 
	This completes the proof. 
$\hfill\Box$
\end{proof}

\bigskip 

Finally, let us mention that the construction of Linstr\"om \cite{Linstrom_g-h} works for $B^\circ_2 [g]$, as well as for usual Sidon sets. 
Namely, having a Sidon set $A$ in $[(N-g)/g]$ one can see that the set $S:= g\cdot A + \{ 0,1,\dots, g-1 \} \subseteq [N]$ belongs to the family $B^\circ_2 [g]$ and hence it gives us another construction of sets $S\in B^\circ_2 [g]$, $S\subseteq [N]$ of size $\sqrt{gN} (1+o(1))$. 

\bigskip 

We continue  this section
considering heritability properties of $B^\circ_2 [g]$--sets.
For different 
$x_1\dots,x_s$ and any set $A\subseteq \Gr$  put $A_{X} := (A+x_1) \cap \dots \cap (A+x_s)$, where $X= \{ x_1,\dots,x_s \}$.  
Taking the same set $X_1=\dots = X_l = X$ in inequality \eqref{f:sub_Sidon} below, one can see that any $B^\circ_k [g]$--set generates $B^\circ_k [g_1]$--sets with a smaller $g_1$. 
In particular, in Brown's construction \cite{Brown} of the set $S\in B^\circ_2 [2]$, this set $S$ is a (almost disjoint) union of classical Sidon sets $S_w = S \cap (S+w)$, $w\in (S-S)\setminus \{0\}$.

\begin{proposition}
	Let $S \in B^\circ_k [g]$, $l\ge 2$ and take any sets $X_1,\dots,X_l$ with  $|X_1|+\dots + |X_l| \ge g+\binom{l}{2}+1$ and such that any $(X_i,X_j)$ forms a co--Sidon pair.  
	Then 
\begin{equation}\label{f:sub_Sidon}
	|S_{X_1} \cap (S_{X_2} + z_1) \cap \dots \cap (S_{X_l} + z_{l-1})| < k  \quad \quad \mbox{ for any different } \quad \quad z_1,\dots, z_{l-1} \neq 0 \,. 
\end{equation}
	In particular, if $S \in B^\circ_2 [2]$ and $\Gr$ has no elements of order two, then for any non--zero $w$ the set $S_w = S \cap (S+w)$ is a Sidon set.  
\label{p:sub_Sidon}
\end{proposition}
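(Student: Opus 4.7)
The plan is to expand the intersection appearing on the left of \eqref{f:sub_Sidon} as a single intersection of translates of $S$ and then count distinct shifts. Setting $z_0 := 0$, one has
\[
	T := S_{X_1} \cap (S_{X_2}+z_1) \cap \dots \cap (S_{X_l}+z_{l-1}) = \bigcap_{i=1}^{l} \bigcap_{x \in X_i}(S + x + z_{i-1}) = \bigcap_{y \in Y}(S+y),
\]
where $Y$ is the multiset $\bigcup_{i=1}^{l}(X_i + z_{i-1})$ of total size $|X_1|+\dots+|X_l| \ge g+\binom{l}{2}+1$. Once I show that the underlying set $Y_{*}$ has at least $g+1$ distinct elements, picking $g+1$ of them, say $y_0, y_1, \dots, y_g$, and translating by $-y_0$ yields
\[
	|T| \le |S \cap (S+y_1-y_0) \cap \dots \cap (S+y_g-y_0)| < k
\]
by the $B^\circ_k[g]$ hypothesis, since the differences $y_r - y_0$ are distinct and non-zero.

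To lower-bound $|Y_{*}|$, I will control the pairwise collisions between different blocks. For each pair $1 \le i < j \le l$, the intersection $(X_i + z_{i-1}) \cap (X_j + z_{j-1})$ corresponds to pairs $(x, x') \in X_i \times X_j$ with $x - x' = z_{j-1} - z_{i-1}$. Since $z_1,\dots,z_{l-1}$ are distinct and non-zero while $z_0 = 0$, this shift is non-zero, so the co-Sidon property of $(X_i, X_j)$ allows at most one such pair. Applying the Bonferroni inequality then gives $|Y_{*}| \ge \sum_i |X_i| - \binom{l}{2} \ge g+1$, as needed.

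The particular case follows by applying the main statement with $l = 2$, $k = g = 2$, and $X_1 = X_2 = \{0, w\}$, or equivalently by repeating the argument directly: expand
\[
	S_w \cap (S_w+z) = S \cap (S+w) \cap (S+z) \cap (S+w+z)
\]
and observe that the shift set $\{0, w, z, w+z\}$ has at least three distinct elements whenever $z \neq 0$. The only possible coincidences compatible with $w, z \neq 0$ are $z = w$, yielding $\{0, w, 2w\}$, or $z = -w$, yielding $\{0, w, -w\}$; in either case the hypothesis that $\Gr$ has no element of order two guarantees $2w \neq 0$, leaving three distinct shifts. The $B^\circ_2[2]$ condition therefore forces $|S_w \cap (S_w+z)| < 2$, i.e., at most $1$, so $S_w$ is Sidon.

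The main point to verify carefully is the pairwise collision bound $|(X_i+z_{i-1}) \cap (X_j+z_{j-1})| \le 1$: this is the entire content of the co-Sidon hypothesis on $(X_i, X_j)$ once the shift $z_{j-1}-z_{i-1}$ is known to be non-zero, which in turn is arranged by the distinctness of the $z_r$'s. Beyond this the argument is purely inclusion--exclusion plus a translation.
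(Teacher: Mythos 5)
Your proof is correct and follows essentially the same route as the paper's: expand the nested intersection into a single intersection of translates of $S$, use the co-Sidon hypothesis on each pair of blocks to bound pairwise collisions of shifts by one, conclude via Bonferroni that there are at least $g+1$ distinct shifts, translate so one of them is $0$, and invoke the $B^\circ_k[g]$ property. The paper's write-up is terser on the collision count and on the particular $S_w$ case; your Bonferroni step and case analysis of $\{0,w,z,w+z\}$ (and the precise role of the no-order-two hypothesis) simply make explicit what the paper leaves implicit.
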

\begin{proof} 
	Let $X_j = \{ x^{(j)}_1, \dots, x^{(j)}_{s_j}\}$, $j\in [l]$.  
	For an arbitrary  $z \in \Gr$, we have 
$$
	\mathcal{S} := S_{X_1} \cap (S_{X_2} + z_1) \cap \dots \cap  (S_{X_l} + z_{l-1}) = 
$$
$$	
	(S+x^{(1)}_1) \cap \dots \cap (S+x^{(1)}_{s_1}) \cap 
	(S+x^{(2)}_1+z_1) \cap \dots \cap (S+x^{(2)}_1+z_1) \cap \dots \cap 
	(S+x^{(l)}_1+z_{l-1}) \cap \dots \cap (S+x^{(l)}_1+z_{l-1})
	\,.
$$
	Since $\sum_{j=1}^l s_j \ge g+\binom{l}{2}+1$, then either $|\mathcal{S}| < k$ by \eqref{def:B_c_2}, \eqref{def:B_c_3} or we have for some indices $x^{(i')}_i + z_i = x^{(j')}_j + z_j$ and  $x^{(i'')}_i + z_i = x^{(j'')}_j + z_j$.
	The last alternative implies that $x^{(i')}_i - x^{(j')}_j  = x^{(i'')}_i - x^{(j'')}_j$ but $X_i$ and $X_j$ form a co--Sidon pair by the assumption.

	In the case $S_w = S \cap (S+w)$ our ground set $X$ is $\{0,w\}$ and it is easy to see that this is a Sidon set. 
 	This completes the proof. 
$\hfill\Box$
\end{proof}

\bigskip 

In Proposition \ref{p:sub_Sidon} we assume that each  $(X_i,X_j)$ forms a co--Sidon pair. 
Again (and this is in the spirit of this section) one can make more general assumptions to the intersections of some shifts of the sets $X_j$ as in \eqref{def:B_c_2}, \eqref{def:B_c_3}  to obtain higher order Sidon sets.

\bigskip

We finish this section discussing the 
tightness 
of Lemma \ref{l:random_Ek}. 
In the next proposition we show that any set $A$ contains rather large (in terms of its energy) subset with controllable size of the maximal $B^\circ_k [g]$ subset.

\begin{proposition}
	Let $A\subseteq \Gr$ be a set. 
	Then there is $A_* \subseteq A$ such that $\E_{g+1} (A_*) \gg_g \E_{g+1} (A)$ and  
	the maximal $B^\circ_k [g]$ subset of $A_*$ has size $O_g (k^{1/(g+1)} |A|^2  \E^{-1/(g+1)}_{g+1} (A))$.\\ 
	In particular, for any $n$ one has $\Sid_n (A_*) \ll n^{1/2} |A|^2 \E^{-1/2} (A)$. 
\label{p:A_*_Sidon} 
\end{proposition}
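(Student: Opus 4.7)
The plan combines a hereditary energy bound, a dyadic pigeonhole, and a Cauchy--Schwarz estimate of the mixed quantity $\mathcal{M}_{g+1}(S,A):=\sum_x r_{S,A}(x)^{g+1}$, where $r_{S,A}(x)=|\{(s,a)\in S\times A:s-a=x\}|$. The first key input is the hereditary bound $\E_{g+1}(S)\leq O_g(k|S|^{g+1})$ for every $S\in B^\circ_k[g]$, which I verify by decomposing $\E_{g+1}(S)=\sum_{\alpha_1,\dots,\alpha_g}|S\cap(S+\alpha_1)\cap\dots\cap(S+\alpha_g)|^2$ according to the pattern of coincidences among the $\alpha_i$: ``generic'' tuples (all coordinates distinct and non--zero) contribute at most $(k-1)\sum_\alpha|S\cap\bigcap_i(S+\alpha_i)|=(k-1)|S|^{g+1}$ by \eqref{def:B_c_3}, while ``degenerate'' tuples (with a zero or repetition) contribute $O_g(|S|^{g+1})$ by direct enumeration, since each coincidence collapses an inner sum to $|S|^g$.

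Next, I dyadically pigeonhole the identity $\E_{g+1}(A)=\sum_x r_{A-A}(x)^{g+1}$ to find $\Delta$ and $P_\Delta=\{x:r_{A-A}(x)\in(\Delta,2\Delta]\}$ with $|P_\Delta|\Delta^{g+1}\gtrsim_g\E_{g+1}(A)$, and set $A_*=\{a\in A:|A\cap(a-P_\Delta)|\geq\Delta|P_\Delta|/(4|A|)\}$. A Markov--type averaging (elements outside $A_*$ contribute at most $\Delta|P_\Delta|/4$ to $\sum_{x\in P_\Delta}r_{A-A}(x)\geq\Delta|P_\Delta|$) gives $\sum_{x\in P_\Delta}r_{A_*-A_*}(x)\geq\Delta|P_\Delta|/2$, so the power--mean inequality yields
\[
	\E_{g+1}(A_*)\geq\sum_{x\in P_\Delta}r_{A_*-A_*}(x)^{g+1}\geq\frac{(\Delta|P_\Delta|/2)^{g+1}}{|P_\Delta|^g}=\frac{\Delta^{g+1}|P_\Delta|}{2^{g+1}}\gg_g\E_{g+1}(A).
\]

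For a maximal $B^\circ_k[g]$-subset $S\subseteq A_*$, the choice of $A_*$ gives $T':=\sum_{x\in P_\Delta}r_{S,A}(x)\geq|S|\Delta|P_\Delta|/(4|A|)$. H\"older's inequality in $x$ provides $T'^{g+1}\leq|P_\Delta|^g\mathcal{M}_{g+1}(S,A)$, and a change of variables $w=y_{g+1}-x$, $\alpha_i=y_{g+1}-y_i$ in the expansion of $r_{S,A}(x)^{g+1}=\big(\sum_y 1_S(y)1_A(y-x)\big)^{g+1}$ decouples $S$ and $A$ into
\[
	\mathcal{M}_{g+1}(S,A)=\sum_{\alpha_1,\dots,\alpha_g}|S\cap\bigcap_i(S+\alpha_i)|\cdot|A\cap\bigcap_i(A+\alpha_i)|.
\]
Then Cauchy--Schwarz plus the hereditary bound give $\mathcal{M}_{g+1}(S,A)\leq\sqrt{\E_{g+1}(S)\E_{g+1}(A)}\leq O_g(\sqrt{k|S|^{g+1}\E_{g+1}(A)})$; combining with $\Delta^{g+1}|P_\Delta|\sim\E_{g+1}(A)$ yields $|S|^{g+1}\leq O_g(k)|A|^{2(g+1)}/\E_{g+1}(A)$, which is the claim. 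The ``in particular'' statement is the $g=1$, $k=n+1$ specialization, since $\Sid_n$-sets are exactly $B^\circ_{n+1}[1]$-sets. The main subtlety is the decoupling identity for $\mathcal{M}_{g+1}(S,A)$ --- once established it makes sharp Cauchy--Schwarz available; without this identity one would only obtain the trivial bound $\mathcal{M}_{g+1}(S,A)\leq\E_{g+1}(A)$, which loses a factor of $|A|^{g+1}/|S|^{g+1}$ and destroys the sharpness.
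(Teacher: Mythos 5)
Your overall plan is sound and uses the same three key ingredients as the paper's argument: a thresholding construction of $A_*$, the decoupling identity
$\sum_x r^{g+1}_{S-A}(x)=\sum_{\alpha_1,\dots,\alpha_g}|S\cap\bigcap_i(S+\alpha_i)|\,|A\cap\bigcap_i(A+\alpha_i)|$,
and the combinatorial content of $\Lambda\in B^\circ_k[g]$. Your hereditary bound $\E_{g+1}(S)\le O_g(k|S|^{g+1})$ followed by Cauchy--Schwarz is a slightly different (and self-contained) way to finish than the paper's, which instead splits the decoupled sum into generic and degenerate shift tuples directly against $A$, obtaining $\sum_y r^{g+1}_{A-\Lambda}(y)\le k|A|^{g+1}+\binom{g+1}{2}|A||\Lambda|^g$; both give the same conclusion.

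There is, however, a genuine quantitative gap: the dyadic pigeonhole producing $P_\Delta$ only yields $\Delta^{g+1}|P_\Delta|\gg_g\E_{g+1}(A)/\log|A|$, not $\gg_g\E_{g+1}(A)$, since $r_{A-A}$ takes values across $\Theta(\log|A|)$ dyadic scales. This loss propagates into both conclusions: you get $\E_{g+1}(A_*)\gg_g\E_{g+1}(A)/\log|A|$ and $|S|\ll_g (\log|A|)^{O_g(1)}\cdot k^{1/(g+1)}|A|^2\E^{-1/(g+1)}_{g+1}(A)$, whereas the proposition asserts bounds with absolute constants depending only on $g$. The paper avoids the pigeonhole entirely by thresholding on the weighted quantity: it sets $A_*=\{a\in A:\sum_{x\in A}r^g_{A-A}(x-a)\ge \E_{g+1}(A)/(2|A|)\}$, so that by definition $\E_{g+1}(A)\le 2\sum_y r^g_{A-A}(y)r_{A-A_*}(y)$, and then applies H\"older directly to this sum; no level set $P_\Delta$ is extracted and no log factor appears. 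If you replace your $P_\Delta$-based threshold with this weighted one, the remainder of your argument (Markov, power mean, decoupling, Cauchy--Schwarz with the hereditary bound) goes through cleanly and recovers the stated log-free bounds. Also, as a small remark on your Markov step: the deduction $\sum_{x\in P_\Delta}r_{A_*-A_*}(x)\ge\Delta|P_\Delta|/2$ needs the symmetry $P_\Delta=-P_\Delta$ (true since $r_{A-A}$ is even) and a two-sided removal count, since a pair $(a,a')$ may be lost because either coordinate leaves $A_*$; this is why the threshold $\Delta|P_\Delta|/(4|A|)$ rather than $\Delta|P_\Delta|/(2|A|)$ is the right choice.
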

\begin{proof} 
	We have 
	\[
	\E_{g+1} (A) = \sum_y  r^{g+1}_{A-A} (y) =  \sum_{a\in A} \sum_{x\in A} r^g_{A-A} (x-a) \,.  
	\]
	Put $A_* = \{ a\in A ~:~  \sum_{x\in A} r^g_{A-A} (x-a) \ge \E_{g+1} (A) /(2|A|) \}$.
	Then
	\[
	\E_{g+1} (A) \le 2 \sum_{a\in A_*} \sum_{x\in A} r^g_{A-A} (x-a) = 2 \sum_y r^{g}_{A-A} (y) r^{}_{A-A_*} (y) 
	\] 
	and using the H\"older inequality several times, we obtain
	\begin{equation}\label{tmp:18.03_1}
	\E_{g+1} (A) \le 2^{g+1} \sum_y r^{g+1}_{A-A_*} (y) \le 2^{g+1} (\E_{g+1} (A_*) \E^{2g+1}_{g+1} (A))^{1/(2g+2)} 
	\end{equation}
	or, in other words, $\E_{g+1} (A_*) \gg_g \E_{g+1} (A)$. 
	Now let $\Lambda$ be the  maximal $B^\circ_k [g]$ subset of $A_*$. 
	Then by the definition of the set $A_*$, one has 
	\begin{equation}\label{f:L_A_*}
	2^{-1} |\L| \E_{g+1} (A) |A|^{-1} \le \sum_y r^{g}_{A-A} (y) r^{}_{A-\L} (y) \,.
	\end{equation}
	Hence as in \eqref{tmp:18.03_1}
	\[
	2^{-(g+1)} |\L|^{g+1} \E_{g+1} (A) |A|^{-(g+1)} \le  \sum_y r^{g+1}_{A-\L} (y) 
	=
	\]
	\[
	= 
	\sum_{x_1,\dots,x_g} |A\cap (A+x_1) \cap \dots \cap (A+x_g)|  |\L\cap (\L+x_1) \cap \dots \cap (\L+x_g)| \,.
	\]
	Applying the fact that $\L \in B^\circ_k [g]$, we get 
	\[
	2^{-(g+1)} |\L|^{g+1} \E_{g+1} (A) |A|^{-(g+1)} \le k|A|^{g+1} + \binom{g+1}{2} |A| |\L|^g \,.
	\]
	Noting that the second term in the last formula is negligible, we obtain the required result. 
	$\hfill\Box$
\end{proof}

\begin{remark}
	From \eqref{f:L_A_*} it follows that the conclusion of Proposition \ref{p:A_*_Sidon} remains true for a wider family of sets $\L$, namely, for $\L$ with  $\E_{g+1}(\L) \ll |\L|^{g+1}$.
	This class of sets and its connection with Sidon sets was discussed in \cite[Section 3.2]{R-N_W}. 
\end{remark}

\bigskip

\noindent{I.D.~Shkredov\\
	Steklov Mathematical Institute,\\
	ul. Gubkina, 8, Moscow, Russia, 119991}
\\
and
\\
IITP RAS,  \\
Bolshoy Karetny per. 19, Moscow, Russia, 127994\\
and 
\\
MIPT, \\ 
Institutskii per. 9, Dolgoprudnii, Russia, 141701\\
{\tt ilya.shkredov@gmail.com}

\end{document}